\newtheorem{theorem}{Theorem}[section]
\newtheorem{definition}[theorem]{Definition}
\newtheorem{lemma}[theorem]{Lemma}
\newtheorem{proposition}[theorem]{Proposition}
\newtheorem{corollary}[theorem]{Corollary}
\theoremstyle{remark}
\newtheorem{remark}[theorem]{Remark}
\newtheorem{example}[theorem]{Example}
\numberwithin{equation}{section}
\numberwithin{figure}{section}
\newcommand{\bra}[1]{\left( #1 \right)}
\newcommand{\cur}[1]{\left\{ #1 \right\}}
\newcommand{\abs}[1]{\left| #1 \right|}
\newcommand{\nor}[1]{\left\| #1 \right\|}
\newcommand{\res}{\llcorner}
\newcommand{\R}{\mathbb{R}}
\newcommand{\N}{\mathbb{N}}
\renewcommand{\d}{\mathsf{d}}
\newcommand{\DD}{D}
\newcommand{\loc}{\operatorname{loc}}
\renewcommand{\div}{\operatorname{div}}
\newcommand{\eps}{\varepsilon}
\title[Nonsmooth vector fields and commutation of their flows]{Lie brackets of nonsmooth vector fields and commutation of their flows}
\begin{document}
	\author{Chiara Rigoni}
\address{Chiara Rigoni, Institut f\"ur Angewandte Mathematik, Universit\"at Bonn, Endenicher Allee 60\\ D-53115, Bonn
}
\email{rigoni@iam.uni-bonn.de}

	\author{Eugene Stepanov}
\address{Eugene Stepanov, St.Petersburg Branch of the Steklov Mathematical Institute of the Russian Academy of Sciences,
	Fontanka 27,
	191023 St.Petersburg, Russia
	\and
	HSE University, Moscow
	\and ITMO University
}
\email{stepanov.eugene@gmail.com}

\author{Dario Trevisan}
\address{Dario Trevisan, Dipartimento di Matematica, Universit\`a di Pisa \\
	Largo Bruno Pontecorvo 5 \\ I-56127, Pisa}
\email{dario.trevisan@unipi.it}

\thanks{	The first author   gratefully acknowledges support by the European Union through the
ERC-AdG 694405 RicciBounds. 	The work of the second author has been also partially financed by
	the Program of the Presidium of the Russian
	Academy of Sciences \#01 'Fundamental Mathematics and its Applications'
	under grant PRAS-18-01 and
	RFBR grant \#20-01-00630A. The third author was partially supported by GNAMPA-INdAM 2019 project "Proprietà analitiche e geometriche di campi aleatori" and University of Pisa, Project PRA 2018-49.
}
\date{\today}

\maketitle

\begin{abstract}
It is well-known that the flows generated by two smooth vector fields commute, if the Lie bracket of these vector fields vanishes.
This assertion is known to extend to Lipschitz continuous vector fields, up to interpreting the vanishing of their Lie bracket 
in the sense of almost everywhere equality. We show that this cannot be extended to general a.e. differentiable vector fields admitting a.e. unique flows. We show however that the extension holds when one field is Lipschitz continuous and the other one is merely Sobolev regular (but admitting a regular Lagrangian flow).
\end{abstract}

\section{Introduction}
One of the well-known basic facts of differential geometry, ultimately leading to the Frobenius theorem on integral manifolds, is that the flows of two smooth vector fields
$V^1$, $V^2$ over a smooth finite dimensional manifold $M$ commute, if and only if their Lie bracket $[V^1, V^2]$ vanishes.
In a more formal way, denoting the flow of $V^i$ by $F_t^i(x):= y(t)$, where $y(\cdot)$ is the integral curve of the differential equation
\begin{equation}\label{eq_FrobCauchy1}
\dot{y}=V^i(y), \quad y(0)=x,
\end{equation}
one has
\begin{equation}\label{eq_FrobCommut1}
F_t^1(F_s^2(x))= F_s^2(F_t^1(x))\quad\mbox{for all $x\in M$, whenever } [V^1, V^2] = 0, 
\end{equation}
once $t\in \R$ and $s  \in \R$ are such that the respective expressions are defined. What happens with this statement for
possibly nonsmooth vector fields $V^i$, even when the underlying manifold $M$ itself remains smooth (in the sequel
we will for simplicity consider just the case of $M=\R^d$, a finite-dimensional Euclidean space)?

Of course, to be meaningful, the question posed has to be restricted to the cases when all the objects present in~\eqref{eq_FrobCommut1} are well-defined. This is however nontrivial already in the case when $V^i$ are Lipschitz. Namely, in this case the flows of $V^i$ are still defined because the Cauchy problem~\eqref{eq_FrobCauchy1} has a unique solution for \textit{every} initial datum $x\in \R^d$. However, there is a problem with the meaning of 
the Lie bracket  $[V^1, V^2]$; in fact, the latter is defined for smooth vector fields by the formula
\begin{equation}\label{eq_FrobCommutDef1}
 [V^1, V^2](x) := DV^2(x) V^1(x)- DV^1(x) V^2(x),
\end{equation}
where $DV^i$ are Jacobi matrices of $V^i$, $i=1,2$, which is meaningless when $V^i$ are only Lipschitz, because then their
derivatives are not necessarily defined for \textit{all} $x\in \R^d$ but only for \textit{almost all} with respect to the Lebesgue measure.
The natural question would be then: does~\eqref{eq_FrobCommut1} hold for a.e. - rather than for all, $x\in \R^d$ - once
the vanishing of the commutator $[V^1, V^2] = 0$ is also understood in the sense of a.e. equality? 
The answer is positive as shown in the seminal paper by F.~Rampazzo and H.J.~Sussmann~\cite{RampSussmann07}.  

The problem is therefore what happens when one descends in regularity of $V^i$ further beyond Lipschitz continuity. In fact, there are many cases when Cauchy problems~\eqref{eq_FrobCauchy1} admit unique solutions for every or just for a.e. intial datum.
For instance, as shown in~\cite[corollary 5.2]{caravenna2018directional}, for a vector field $V\in W^{1,p}(\R^d;\R^d)$ with $p>d$ and  $\div V$ bounded, a unique flow $x\mapsto F_t(x)$ is defined for a.e.\ $x\in\R^d$ and all $t\in\R$ as $F_t(x):= y(t)$ where $y(\cdot)$ is the unique solution to the ODE
\[\dot{y}=V(y)\]
satisfying $y(0)=x$. Further, even if $p\leq d$, then a solution to the latter ODE may be not unique for a.e. initial datum,
but in this case there is a natural selection of such solutions called {\em regular Lagrangian flow}~\cite{Ambrosio04_transpBV}. 
Moreover, there are many other cases besides Sobolev regularity when the vector field $V$ admits a regular Lagrangian flow.
In view of the Rampazzo-Sussmann result for Lipschitz vector fields, it is tempting to conjecture that also in the cases
in which the fields $V^i\in W^{1,p_i}(\R^d;\R^d)$ generating unique flows for a.e.\ initial datum (or at least regular Lagrangian flows) 
$F^i$, it is true that
\begin{equation}\label{eq_FrobCommut2}
F_t^1(F_s^2(x))= F_s^2(F_t^1(x))\quad\mbox{for a.e. $x\in \R^d$, whenever } [V^1, V^2] = 0 \quad\mbox{a.e.},
\end{equation}
of course when the Lie bracket  $[V^1, V^2](x)$ is well defined for a.e.\ $x\in \R^d$.
We will show however by means of a counterexample that in general this is false. Nevertheless we are able to show (Theorem~\ref{thm:frobenius} and Theorem~\ref{thm:frobenius_local_Lploc}) that this conjecture is true, if one of the vector fields is Lipschitz (which generalizes the result of~\cite{RampSussmann07}).

The main difficulty here is that when the vector fields $V^i$ are just Sobolev, the respective Lagrangian flows have only very weak regularity properties: in particular, they are just summable but not Sobolev (not even of fractional order)
as shown in~\cite{ACM19,BQ19}, so that their weak derivatives do not have any pointwise (even a.e.) meaning. This is in sharp contrast with the situation when $V^i$ are Lipschitz, since in the latter case it is a textbook result that the respective flows are Lipschitz too. That is why the technique we adopt in this paper is essentially different from that used in~\cite{RampSussmann07}. In fact, in the latter paper a \textit{set-valued Lie bracket} of Lipschitz vector fields is introduced and then it is shown, substantially, that the usual expansions (of course adapted now to the set-valued setting) of compositions of flow maps in each point are still valid for this case with the  set-valued Lie bracket instead of the classical one.
As a side remark, we mention that another quite natural extension of both the classical Lie bracket and its set-valued version,
the measure valued Lie bracket, has been introduced recently in~\cite{CavagnariMarigona18_meascomm}.
In the general case of Sobolev $V^i$, in view of the mentioned lack of regularity of flows, no technique based on pointwise
expansions would work and therefore we are forced to develop a completely different PDE/measure theory-style approach.

\section{Notation and preliminaries}

\subsection{General notation} 
The finite-dimensional space $\R^d$ is assumed to be equipped
with the Euclidean norm $|\cdot|$;
the notation $B_r(x)\subset \R^d$ stands for the usual open Euclidean ball of radius
$r$ centered at $x$. 
In general, the norm in the normed space $E$ will be denoted by $\|\cdot\|_E$.

All the measures over a metric space
considered in the sequel are positive Radon measures, not necessarily finite.
The notation $\mathcal{L}^d$ stands for the Lebesgue measure in $\R^d$. 
If $\mu$ is a measure over a metric space $X$, then
for a Borel
map $T\colon X\to Y$ between metric spaces $X$ and $Y$ we denote
by $T_{\#}\mu$ the push-forward of $\mu$, i.e.\ the measure over
$Y$ defined by
$(T_{\#}\mu)(B):= \mu(T^{-1}(B))$ for every Borel $B\subset Y$.

We use the notation $A \lesssim B$  for possibly vector valued functions $A$, $B$ defined on a subset $\Omega \subseteq \R^d$, when there exists a constant $C \ge 0$ such that
\[ |A|(x) \le C |B|(x) \quad \text{for every $x \in \Omega$}\]
with some constant $C>0$ depending possibly on parameters such as dimension of the space or integrability exponents, but not on $A$ and $B$. 

\subsection{Spaces} For a measure $\mu$ in a metric space $E$
we denote by $L^p(E, \mu ;\R^m)$ 
the usual Lebesgue space of $\mu$-integrable with exponent $p\geq 1$
functions $f\colon E\to \R^m$ ($\mu$-essentially bounded when $p=+\infty$); the reference to $\mu$ will be omitted in the
case $\mu=\mathcal{L}^d$. The reference to $E$ may be also omitted when no confusion is possible.   
Analogously, $W^{k,p}(\R^d;\R^m)$ (resp.\ $W^{k,p}_{\loc}(\R^d;\R^m)$) will stand for the usual Sobolev  (resp.\ locally Sobolev) class of functions over $\R^d$ with values in $\R^m$.
In all the cases the reference to $\R^m$ will be omitted when $m=1$, i.e. for real valued functions. 
The norm in $L^p(\R^d)$ is denoted for brevity just $\|\cdot\|_p$.

The space $C([a,b];\R^d)$ of continuous curves in $\R^d$ parameterized over the interval $[a,b]$ is endowed with the usual supremum norm. For every $t\in [a,b]$ we let $e_t\colon C([a,b];\R^d)\to \R^d$ stand for the evaluation map
$e_t(\theta):=\theta(t)$.

\subsection{Variation} Given a function
\[ \omega\colon [-T,T]^2 \to \R, \quad (s,t) \mapsto \omega_{st},\]
we write
\[ [\omega] := \limsup_{|\pi| \to 0} \sum_{t_i \in \pi} |\omega_{t_{i} t_{i+1}}|.\]
This generalizes the notion of total variation (which corresponds to the case $\omega_{st} := f_t - f_s$).

\subsection{Maximal functions}
We recall the usual definitions of the sharp maximal function $g^\sharp_{r}$ for an $r>0$ and the maximal function 
$g^*$
of a locally integrable function $g\colon \R^d\to \R$, namely, 
\begin{align*}
g^\sharp_{r} (x) & := \sup_{ s \in [0,r]} \frac{1}{\mathcal{L}^d(B_s(x))} \int_{B_s(x)}|g(y) - g(x) | \d y,\\
g^*(x) & := \sup_{ s \ge 0 } \frac{1}{\mathcal{L}^d(B_s(x))} \int_{B_s(x)}|g(y)| \d y,
\end{align*}
so that in particular
\begin{equation}\label{eq_sharpmax_est1}
g^\sharp_r \le 2 g^*.
\end{equation}
\begin{remark}[convergence of maximal functions]\label{rem:sharp-maximal}
	We notice that, if $g \in L^p(\R^d)$ with $p>1$, then 
	\begin{equation}\label{eq:maximal} g^\sharp_r, g^* \in L^p(\R^d), \end{equation}
	and $g^\sharp_r(x)\searrow 0$  as $r \searrow 0$ for a.e.\ $x \in \R^d$ (this can be proven by density of continuous functions). In particular,
	\[ \nor{ g^\sharp_r}_p \searrow 0 \quad\mbox{as $r \searrow 0$}.\]
\end{remark}

\subsection{Pointwise inequalities for Sobolev functions}
Recall that for Sobolev functions $f \in W^{1,1}_{\loc}(\R^d)$, one has the following pointwise inequality~\cite[corollary 1]{bojarski1993pointwise}, valid for every $x$, $y \in \R^d \setminus N$, where $N$ is Lebesgue-negligible: 
\begin{equation}\label{eq:pointwise-sobolev-sharp}
 |f(y)- f(x) - \DD f(x) (y-x) | \lesssim  |y-x| \bra{ |\DD f|^\sharp_{|y-x|}(y)+ |\DD f|^\sharp_{|y-x|}(x)}.
\end{equation}
From~\eqref{eq_sharpmax_est1} we obtain
\begin{equation}\label{eq:pontiwise-sobolev-star}
 |f(y)- f(x) | \lesssim  |y-x| \bra{ |\DD f|^*(y)+ |\DD f|^*(x)}.
\end{equation}
If $f \in W^{2,1}_{\loc}(\R^d)$, then~\cite[corollary 1]{bojarski1993pointwise} gives also
\begin{equation}\label{eq:pointwise-sobolev-second-order}
|f(y)- f(x) - \DD f(x) (y-x) | \lesssim  |y-x|^2 \bra{ |\DD^2 f|^*(y)+ |\DD^2 f|^*(x)}.
\end{equation}

\subsection{Regular Lagrangian flows}

\begin{definition}
We say
	that a Borel map $F \colon [a, b] \times \R^d\to \R^d$ is a regular Lagrangian flow for the 
	(possibly time-dependent) vector field $V\colon [a, b] \times \R^d\to \R^d$
	if
	\begin{itemize}
		\item[(i)] for a.e. $x \in\R^d$ the map $t \in [a,b]\mapsto  F(t, x)$ is an absolutely continuous
		solution of the ODE
\[\dot{y}=V(t, y)\]
for $t\in [a,b]$,
satisfying $y(a)=x$; 		
		\item[(ii)] there is a constant $C>0$ independent of $t$ (called the compressibility constant of $F$) such that
		\[F(t,\cdot)_{\#} \mathcal{L}^d \leq C \mathcal{L}^d\]
		for every $t \in [a, b]$.
	\end{itemize}
\end{definition}

For flows we usually write $F_t(x)$ instead of $F(t,x)$.
We will further use a couple of obsersvations.

\begin{remark}\label{rm_FrobCompr1}
	When $\nu$ is a measure over $\R^d$ with bounded density with respect to the Lebesgue measure, i.e.\
	\[
	\nu= f \mathcal{L}^d\quad\mbox{with $f\in L^\infty(\R^d)$},
	\]
	and $F \colon [a, b] \times \R^d\to \R^d$ is a regular Lagrangian flow with compressibility constant $C$ 
	for some vector field, then
	one has 
	\begin{equation}\label{eq_compress1}
	F(t,\cdot)_{\#} \nu \leq C\|f\|_\infty  \mathcal{L}^d,
	\end{equation}
	because $F(t,\cdot)_{\#} \nu= (f\circ F(t,\cdot)^{-1})  F(t,\cdot)_{\#} \mathcal{L}^d$.
\end{remark}

\begin{lemma}\label{lm_LagrFlowNeglset}
If $F \colon [a, b] \times \R^d\to \R^d$ 
is a regular Lagrangian flow and $N\subset \R^d$ is Lebesgue negligible, then
there is a $B_0\subset \R^d$ with $\mathcal{L}^d(B_0)=0$ such that
$F(t,x)\not\in N$ for every $x\not\in B_0$ and 
a.e. $t\in [a,b]$.  
\end{lemma}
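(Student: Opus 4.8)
The plan is to reduce the statement to a double application of the Fubini--Tonelli theorem to the ``bad set'' in $[a,b]\times\R^d$. First I would, without loss of generality, replace $N$ by a Borel set $\tilde N \supseteq N$ with $\mathcal{L}^d(\tilde N)=0$ (any Lebesgue-negligible set is contained in a Borel negligible one), so that
\[ E := \cur{ (t,x) \in [a,b]\times\R^d : F(t,x) \in \tilde N } = F^{-1}(\tilde N) \]
is Borel, hence $\bra{\mathcal{L}^1\res[a,b]}\otimes\mathcal{L}^d$-measurable, since $F$ is a Borel map.

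Next I would estimate the measure of $E$ by slicing in $x$ with $t$ fixed: by the compressibility property (ii) in the definition of a regular Lagrangian flow, for every $t\in[a,b]$ one has
\[ \mathcal{L}^d\bra{\cur{x : F(t,x)\in\tilde N}} = \bra{F(t,\cdot)_{\#}\mathcal{L}^d}(\tilde N) \leq C\,\mathcal{L}^d(\tilde N) = 0, \]
so that, by Tonelli,
\[ \bra{\bra{\mathcal{L}^1\res[a,b]}\otimes\mathcal{L}^d}(E) = \int_a^b \mathcal{L}^d\bra{\cur{x : F(t,x)\in\tilde N}}\,\d t = 0. \]
Slicing now in the other variable (Fubini), there is a Lebesgue-negligible set $B_0\subseteq\R^d$ such that for every $x\notin B_0$ the slice $E_x := \cur{t\in[a,b] : F(t,x)\in\tilde N}$ satisfies $\mathcal{L}^1(E_x)=0$; equivalently, $F(t,x)\notin\tilde N\supseteq N$ for a.e.\ $t\in[a,b]$, which is exactly the claim.

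I do not expect any serious obstacle here; the only point deserving a little care is the measurability of $E$ needed to invoke Fubini--Tonelli, which is precisely why I would pass at the outset to a Borel representative $\tilde N$ of the null set: with $F$ only assumed Borel, $F^{-1}(N)$ need not be product-measurable for an arbitrary Lebesgue-null $N$.
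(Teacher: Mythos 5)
Your proof is correct and follows essentially the same Fubini--Tonelli argument as the paper: define the bad set in $[a,b]\times\R^d$, use compressibility to kill each $t$-slice, conclude that the product measure of the bad set is zero, and then slice the other way. The one place where you go slightly beyond the paper is in first passing to a Borel representative $\tilde N\supseteq N$ before forming $F^{-1}(\tilde N)$; the paper works directly with $N$, leaving the product-measurability of $\{(t,x):F(t,x)\in N\}$ implicit, and your preliminary step is the standard way to make that point airtight.
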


\begin{proof}
For the set $G:=\{(x,t)\in [a,b]\times\R^d\colon F(t,x)\in N\}$ 
one has $\mathcal{L}^{d+1}(G)=0$ by Fubini theorem since for each fixed $t\in [a,b]$ one has
\begin{align*}
\mathcal{L}^d(G\cap(\{t\}\times\R^d))=(F(t,\cdot)_{\#} \mathcal{L}^d)(N) \leq C \mathcal{L}^d (N)=0.
\end{align*}
Therefore the set
\[
B_0:=\{x\in \R^d\colon \mathcal{L}^1(G\cap([a,b]\times\{x\}))>0\}\subset \R^d
\]
is Lebesgue negligible as claimed.
\end{proof}


\section{Vanishing Lie bracket does not imply commutativity of flows}\label{sec:counterexample}

In this section we show by means of the example below that when two vector fields have a.e. vanishing Lie bracket, it is in general not true that their flows commute a.e.\ even if we assume that they are a.e. uniquely defined, and even if they are regular Lagrangian. The main idea comes from an example \cite[Nelson's example]{Reed-Simon} of non-commutativity of groups with generators commuting on a common dense set. In our case, we consider a foliation of $\R^d$ into helix-like hypersurfaces so that the composition of the two flows yields a rotation along the axis of the helix. 

\begin{example}
Let $d :=3$,
\[ f(x,y) := \arctan\bra{\frac{y}{x}}\]
and consider the vector fields
\begin{align*}
V^1 &:=\partial_{x} + (\partial_{x}f) \partial_z = \partial_{x} - \frac{y}{x^2 + y^2} \partial_z,\\ 
 V^2 & := \partial_{y} + (\partial_{y}f) \partial_z= \partial_{y} + \frac{x}{x^2 + y^2}\partial_z,
\end{align*}
so that
\begin{equation}\label{eq_Lie0a}
[V^1, V^2] =\bra{ \partial_{xy}^2 f -  \partial_{yx}^2 f } \partial_z = 0,
\end{equation}
i.e.\ their Lie bracket vanishes
(of course the function $f$, the vector fields and their Lie bracket are defined everywhere in $\R^d$ outside of the two-dimensional plane $\{x=0\}$, hence a.e.\ in $\R^d$). 
We now verify the sequence of claims.

{\sc Claim 1}. Both $V^1$ and $V^2$ are tangent to the level sets of the function 
$(x,y,z)\in \R^d \mapsto z-f(x,y)\in \R$, i.e.\ to the
graphs of the functions $z=f(x,y)+C$ in $\R^d$, with $C$ any constant, as observed by a direct calculation.

{\sc Claim 2}. The flows $F^i$ for the vector fields $V^i$, $i=1,2$, are uniquely defined over $[-T,T]$, for every $T>0$ and for every initial datum outside of the plane $\{x=0\}$.
In fact, this is quite immediate for $F^2$, since the solution to the ODE $\dot\theta= V^2(\theta)$ is uniquely defined for every
initial datum $\theta(0)\in \R^d\setminus \{x=0\}$  over all times.

As for $F^1$, we consider the solutions to the ODE $\dot\theta= V^1(\theta)$ for every initial datum outside of the plane $\{x=0\}$.
If $\bar x:= \theta_1(0)<0$ then the solution $\theta(\cdot)$ to the respective Cauchy problem is defined uniquely for all $t\in (-\infty, -\bar x)$,
and, analogously, if $\bar x:= \theta_1(0)>0$ then it is defined uniquely for all $t\in (-\bar x, +\infty)$.
In the first case,  one has
\begin{equation*}
\lim_{t\to -\bar x -0} f(\theta_1(t), \theta_2(t))= 
\left\{
\begin{array}{rl}
\pi/2, &\mbox{if $\bar y<0$},\\
0, & \mbox{if $\bar y=0$},\\
-\pi/2, &\mbox{if $\bar y>0$},
\end{array}
\right.
\end{equation*} 
because $\theta_2(t)=\theta_2(0)$ for all $t\in (-\infty, -\bar x)$ and 
\[
\lim_{t\to -\bar x -0}\theta_1(t)=0^-.
\] 
Thus, denoting $\bar z:= \theta_3(0)$ and recalling Claim~1, we get that
\begin{align*}
\lim_{t\to -\bar x -0}\theta_3(t) &=\lim_{t\to -\bar x -0} f(\theta_1(t), \theta_2(t))
+\left(\bar z - f(\bar x, \bar y)\right)\\
&=
\bar z - f(\bar x, \bar y) -\frac{\pi}{2} \mathrm{sign}\,\bar y .
\end{align*}
Symmetrically in the second case, $\theta_2(t)=\theta_2(0)$ for all $t\in (-\bar x, +\infty)$,
\[
\lim_{t\to -\bar x +0}\theta_1(t)=0^+
\] 
and
\begin{align*}
\lim_{t\to -\bar x +0}\theta_3(t) &=
\bar z - f(\bar x, \bar y) -\frac{\pi}{2} \mathrm{sign}\,\bar y .
\end{align*}
Summing up, we have that absolutely continuous solutions $\theta(\cdot)$ to the respective Cauchy problem are
defined uniquely for every initial datum 
outside of the plane $\{x=0\}$ and for a.e.\ $t\in \R$, or, more precisely, for every $t\in \R$ except $t=-\bar x$,
with
\begin{equation}\label{eq_limThet1}
\lim_{t\to -\bar x}\theta(t) =
\left( 0,\bar y, \bar z - f(\bar x, \bar y) -\frac{\pi}{2} \mathrm{sign}\,\bar y \right).
\end{equation}
This defines uniquely the flow $F^1$.

{\sc Claim 3}. 
One has $\div V^i=0$, and hence the flows $F^i$
are regular Lagrangian (because $F^i_{t\#}\mathcal{L}^d=\mathcal{L}^d$ for all $t$ for which $F^i_{t}$ is defined, i.e.\ the compressibility constant is one),  for $i=1,2$. 

{\sc Claim 4}. Finally, the flows $F^1$ and $F^2$ do not commute despite~\eqref{eq_Lie0a}.
In fact, consider an arbitrary
$p=(\bar x, \bar y,\bar z)\in \R^d$ with $x<0$ and $y<0$. Then for $\theta^1(t):=F^1_t(p)$ one has
 \begin{equation}\label{eq_limThet2}
\theta^1(t) =
\left( \bar x +t, \bar y,\theta^1_3(t)\right)
\end{equation}
with $\theta^1_3(-\bar x)=\bar z - f(\bar x, \bar y) +\pi/2$ in view of~\eqref{eq_limThet1}. By Claim~1 one has that $\theta^1(t)$ 
for $t>-\bar x$
belongs to the graph of function $z=f(x,y) + C$, which means 
\[
\theta^1_3(t)=f(\bar x+t,\bar y) + C
\]
for all $t>-\bar x$. Letting $t\to -\bar x+0$, we get
\[
\bar z - f(\bar x, \bar y) +\pi/2=\theta^1_3(-\bar x)= -\pi/2+C,
\]
which gives $C=\bar z - f(\bar x, \bar y) +\pi$. Therefore, 
$F^2_s(F^1_t(p))$ belongs to the graph of function
\[
z=f(x,y) + \bar z - f(\bar x, \bar y) +\pi
\]
for every $s\in \R$ and $t>-\bar x$.

On the other hand, for $\theta^2(s):=F^2_s(p)$ one has 
\[
\theta^2_2(s)= \left(\bar x, \bar y+s, , \theta^2_3(s) \right),
\] so that
for every $s> -\bar y$ one has $\theta^2_2(s)>0$. 
Then for $\sigma^1(t):=F^1_t(\theta^2(s))$ 
for $s>-\bar y$ 
one has
\begin{equation}\label{eq_limThet3}
\sigma^1(t) =
\left( \bar x+t, \bar y+s,  \sigma^1_3(t) \right).
\end{equation}
Since $\theta^2(s)$ belongs to the graph of the function
\[
z=f(x,y) + \bar z - f(\bar x, \bar y) 
\]
by Claim~1, then so is $\sigma^1(t)$ for $t<-\bar x$. 
By~\eqref{eq_limThet1} one has
\begin{equation}\label{eq_limThet4}
\sigma^1(-\bar x) =
\left( \bar x, \bar y+s,  \bar z - f(\bar x, \bar y) -\frac{\pi}{2} \right).
\end{equation}
Hence for $t>-\bar x$ (and $s>-\bar y)$ one has that
$F^1_t(F^2_s(p))=\sigma^1(t)$ belongs to the graph of function $z=f(x,y) + C$, that is, 
\[
\sigma^1_3(t)=f(\bar x+t,\bar y) + C
\]
for all $t>-\bar x$, and the value of $C$ can be found by letting $t\to -\bar x+0$, since then by~\eqref{eq_limThet4}
we have
\[
\bar z - f(\bar x, \bar y) -\pi/2=\sigma^1_3(-\bar x)= \pi/2+C,
\]
which gives $C=\bar z - f(\bar x, \bar y) -\pi$. Therefore, 
$F^1_t(F^2_s(p))$ belongs to the graph of function
\[
z=f(x,y) + \bar z - f(\bar x, \bar y) -\pi
\]
for every $s> -\bar y$ and $t>-\bar x$.
In other words for all such pairs $(s,t)$ one has
\[
F^1_t(F^2_s(p))\neq F^2_s(F^1_t(p))
\]
as claimed.
\end{example}


\begin{remark}
One can easily check that both $V^1$, $V^2$ are not  Sobolev (even locally) on $\R^3$, since for $f(x,y):= \arctan(y/x)$ we have $f \notin W^{2,1}_{\loc}(\R^2)$. We may in fact prove more, that is, any two two vector fields on $\R^3$ of the form
 \[ V^1 =\partial_{x} + \partial_{x}f \partial_z, \quad V^2 = \partial_{y} + \partial_{y}f \partial_z,\]
 with $f= f(x,y) \in W^{2,1}_{loc}(\R^2)$ must have commuting flows $F^1$, $F^2$. Indeed, they are explicitly given by
\begin{align*}
F^1_s( x,y,z) & :=\left( x+s, y, z + \int_0^s \partial_{x}f(x+r, y) d r\right),\\
F^2_t( x,y,z) & :=\left( x, y+t, z + \int_0^t \partial_{y}f(x, y+r) d r\right).
\end{align*}
If we introduce the differential $1$-form on $\R^{3}$
\[ \omega := df = \partial_{x} f  d x + \partial_{y} f  d y,\]
then $F^1_s \circ F^2_t  = F^2_t \circ F^1_s$, if and only if 
\[ \int_{\partial R} df  = 0\]
for ``almost every'' oriented rectangle $R \subset \R^2$ with parallel sides to the coordinate axes and side lengths $s$, $t$ (namely, for every rectangle of the form $[x, x+s] \times [y, y+t]$ for a.e. $(x,y)\in \R^2$). 
Note that $\omega$ is independent on $z$, and, moreover,
if $f \in W^{2,1}_{\loc}(\R^2)$, then the components of $\omega$ belong to $W^{1,1}_{\loc}(\R^2)$, so that we can apply Stokes theorem to deduce
\[  \int_{\partial R} \omega=\int_{\partial R} df = \int_R d^2 f = 0,\]
thus the two flows commute, i.e.\ to be more precise
\[F^1_t(F^2_s(x,y,z))=F^2_s(F^1_t(x,y,z))\]
 for a.e. $(x,y)\in \R^2$, all $z\in \R$ and $(s,t)\in \R^2$. 
 \end{remark}

\section{Vanishing Lie bracket and commutativity of regular Lagrangian flows for Sobolev vector fields}

We prove now that regular Lagrangian flows $F^1$, $F^2$ of Sobolev vector fields $V^1$, $V^2$ respectively commute a.e., if the Lie bracket of the latter vanishes a.e., i.e.~\eqref{eq_FrobCommut2} holds, once  one of the two  is Lipschitz continuous.

\begin{theorem}\label{thm:frobenius}
Let $p\in [1, \infty]$, 
and assume that
$ V^1 \in W^{1,p}(\R^d)$ 
is such that its regular Lagrangian flow $F^1_s$ is uniquely defined for $s$ in some interval containing zero, and
$V^2 \in W^{1,\infty} (\R^d)$.  
Let $(F^1_t)_{t \in \R}$, $(F^2_t)_{t \in \R}$ be the corresponding regular Lagrangian flows. If
\[ [V^1, V^2](x) := (\DD V^2)(x) V^1(x) -  (\DD V^1)(x) V^2(x) = 0\quad\mbox{for a.e.\ $x \in \R^d$},\]
then the flows commute, i.e., for a.e.\ $x \in \R^d$, it holds
\[ F^2_t \bra{ F^1_s (x) }= F^1_s\bra{ F^2_t (x)}, \quad \text{for every $s$, $t \in \R$.}\]
\end{theorem}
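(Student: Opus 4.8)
The plan is to exploit the fact that $V^2$ is Lipschitz, so its flow $F^2_t$ is a bi-Lipschitz homeomorphism of $\R^d$ with Lipschitz constant $e^{|t|\,\mathrm{Lip}(V^2)}$, and to reduce the commutation identity to a statement about the pushed-forward vector field. Concretely, fix $t$ and set $W^1 := (F^2_t)_*V^1$, i.e. the vector field whose flow is $F^2_t \circ F^1_s \circ F^2_{-t}$; this flow is again a regular Lagrangian flow, since the compressibility constant only gets multiplied by the bounded Jacobian factors coming from $F^2_{\pm t}$, and uniqueness is preserved under the bi-Lipschitz change of variables. The commutation claim $F^2_t\circ F^1_s = F^1_s\circ F^2_t$ for a.e.\ $x$ and all $s$ is then equivalent to $W^1 = V^1$ a.e.\ (using uniqueness of regular Lagrangian flows, which holds whenever $V^1$ admits a unique one). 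So the whole theorem becomes: \emph{the vanishing of $[V^1,V^2]$ forces $(F^2_t)_*V^1 = V^1$ for every $t$.}

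\textbf{Key steps.} First I would differentiate in $t$ at $t=0$ in a weak (distributional) sense: formally, $\frac{d}{dt}\big|_{t=0}(F^2_t)_*V^1 = [V^2, V^1] = -[V^1,V^2] = 0$. Making this rigorous is the heart of the matter. I would test against a smooth compactly supported vector field $\varphi$ (or $1$-form), and study the function $t \mapsto \int_{\R^d} \langle (F^2_t)_*V^1, \varphi\rangle\,dx = \int_{\R^d}\langle V^1, (F^2_{-t})_*\varphi\rangle\,\mathrm{(Jacobian)}\,dx$, rewriting everything by the change of variables $y = F^2_{-t}(x)$ so that all the $t$-dependence sits on smooth objects ($\varphi\circ F^2_t$, $\DD F^2_t$, the Jacobian determinant) while $V^1$ stays fixed. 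Here the pointwise Sobolev inequalities~\eqref{eq:pontiwise-sobolev-star} and the maximal-function bounds (Remark~\ref{rem:sharp-maximal}), together with $V^1 \in W^{1,p}$ and $\DD V^1 \in L^p$, give the uniform integrability needed to differentiate under the integral sign and to control remainder terms of the flow expansion $F^2_t(x) = x + tV^2(x) + o(t)$ in $L^p_{\loc}$. The computation of $\frac{d}{dt}$ then produces exactly the weak form of $\mathrm{div}$-type terms which, after an integration by parts transferring one derivative onto $V^1$ (legitimate since $V^1\in W^{1,p}$ and $V^2\in W^{1,\infty}$), collapse to $\int \langle [V^1,V^2], \varphi\rangle = 0$. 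This shows $t\mapsto (F^2_t)_*V^1$ has vanishing weak derivative, hence is constant; evaluating at $t=0$ gives $(F^2_t)_*V^1 = V^1$.

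\textbf{Main obstacle.} The delicate point is the interchange of the $t$-derivative with the spatial integral and the justification that the remainder in the first-order expansion of $F^2_t$ is genuinely negligible when paired against $V^1$ and $\DD V^1$ — this is where the lack of any Sobolev regularity of the flow $F^1$ would hurt, but crucially we never differentiate $F^1$: only $F^2$, which is Lipschitz with controlled derivatives. Still, $\DD F^2_t$ is merely $L^\infty$ (not continuous), so the identity $\frac{d}{dt}\DD F^2_t = \DD V^2 (F^2_t)\,\DD F^2_t$ holds only in an integrated/a.e. sense, and one must argue with difference quotients and dominated convergence rather than classical calculus; the pointwise inequality~\eqref{eq:pontiwise-sobolev-star} applied to $V^1$ along the (Lipschitz) displacement $F^2_t(x) - x = tV^2(x)+o(t)$ is the right tool to bound $|V^1(F^2_t(x)) - V^1(x) - t\,\DD V^1(x)V^2(x)|$ by $|t|\cdot o(1)\cdot(|\DD V^1|^*(F^2_t(x)) + |\DD V^1|^*(x))$, whose $L^1_{\loc}$ norm is controlled via the compressibility of $F^2_t$ and $|\DD V^1|^*\in L^p$. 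Once these estimates are in place the argument is essentially a careful bookkeeping of terms.
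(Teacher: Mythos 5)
Your reduction is genuinely different from the paper's and, in principle, sound: the paper proves directly that for a.e.\ $x$ the curve $s\mapsto F^2_t(F^1_s(x))$ is an integral curve of $V^1$ (via $L^1$-estimates on a remainder $B_{s,s';t}$, Gronwall, and a measure-theoretic criterion, Proposition~\ref{prop:taylor-first-order}), whereas you propose to prove the invariance $(F^2_t)_*V^1 = V^1$ and then invoke uniqueness of the regular Lagrangian flow of $V^1$. The latter is an attractive conceptual shortcut. However, there is a concrete gap in the way you propose to establish invariance. You test against smooth $\varphi$, change variables so that $\Psi(t) = \int\langle \DD F^2_t(x) V^1(x), \varphi(F^2_t(x))\rangle J_t(x)\,\d x$, differentiate in $t$, and then ``integrate by parts transferring one derivative onto $V^1$.'' This transfer works at $t=0$, where $\DD F^2_0 = I$, $F^2_0 = \mathrm{id}$, $J_0 = 1$, and the two transport terms combine to $\int\sum_{j,k} V^1_j\,\partial_k(\varphi_j V^2_k)\,\d x = -\int\langle \DD V^1 V^2, \varphi\rangle\,\d x$, yielding $\Psi'(0) = \int\langle [V^1,V^2],\varphi\rangle\,\d x = 0$. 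But at $t\ne 0$ the same computation, written back in the $y$-variable, produces
\[
\Psi'(t) = \int\Big( \langle \DD V^2\, W^1_t,\varphi\rangle + \sum_{j,k} (W^1_t)_j\,\partial_k(\varphi_j V^2_k) \Big)\,\d y, \qquad W^1_t := (F^2_t)_* V^1,
\]
and the second term is the distributional pairing $-\langle \DD W^1_t\, V^2, \varphi\rangle$, \emph{not} a pairing with $\DD V^1$. The integration by parts you invoke would require $W^1_t$ to be Sobolev, which it is not — $\DD F^2_t$ is merely $L^\infty$ — so you end up needing $[W^1_t, V^2] = 0$, which is exactly what you are trying to prove. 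As written, the argument only shows $\Psi'(0) = 0$, and the jump to ``$t\mapsto (F^2_t)_*V^1$ has vanishing weak derivative, hence is constant'' is unjustified.

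The fix is available and is essentially the pointwise computation you gesture at in your ``main obstacle'' paragraph, but it needs to be the \emph{main} argument rather than an aside. Write $W^1_t(y) = (\DD F^2_{-t}(y))^{-1}\, V^1(F^2_{-t}(y))$ and differentiate pointwise in $t$, for a.e.\ $y$: using the matrix ODE $\partial_t \DD F^2_{-t}(y) = -\DD V^2(F^2_{-t}(y))\,\DD F^2_{-t}(y)$ (classical for Lipschitz $V^2$, a.e.\ in $y$), and the Sobolev chain rule $\partial_t V^1(F^2_{-t}(y)) = -\DD V^1(F^2_{-t}(y))\, V^2(F^2_{-t}(y))$ along the Lipschitz flow (this is precisely Lemma~\ref{lm_chainrule_sobolev_curves}, applied to $f=V^1$ along curves of $F^2$), one obtains
\[
\partial_t W^1_t(y) = (\DD F^2_{-t}(y))^{-1}\,[V^1,V^2]\big(F^2_{-t}(y)\big) = 0
\]
for a.e.\ $y$ and a.e.\ $t$, using the bounded compression of $F^2$ to guarantee that $F^2_{-t}(y)$ avoids the null set where $[V^1,V^2]\ne 0$ (this is Lemma~\ref{lm_LagrFlowNeglset}). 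Then $W^1_t = V^1$ a.e.\ for every $t$, and the conclusion follows from the uniqueness of the regular Lagrangian flow of $V^1$, modulo a short argument to upgrade ``for a.e.\ $x$, for a.e.\ $(s,t)$'' to ``for a.e.\ $x$, for all $(s,t)$'' using the continuity of $F^2$ in $t$. This carried-out version would be a valid alternative proof; as it stands, the distributional-derivative route has the gap described above.
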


\begin{proof}
Since $V^2$ is Lipschitz continuous, its regular Lagrangian flow coincides a.e.\ with the classical flow. Therefore, we may assume that $x \mapsto F^2_t(x)$ is defined everywhere and Lipschitz continuous. It is also  sufficient to prove the thesis for $s$, $t \in [-T, T]$ for an arbitrary $T>0$ such that both $F^i_t$ are defined for $t\in [-T,T]$, $i=1,2$. 

Let $\nu$ stand for a finite measure on $\R^d$ with bounded and strictly positive density with respect to $\mathcal{L}^d$
(the $L^\infty$ norm of the latter will be denoted by $\|\nu\|_\infty$), e.g.\ a standard Gaussian.
For  $x \in \R^d$ and $s, t \in [-T,T]$ we set
\begin{equation}\label{eq_Xst1a}
X_{s,t}(x) := F^2_t( F^1_s(x)).
\end{equation}
Since the flow $F^2$ is Lipschitz continuous, we have that for a.e.\ $x \in \R^d$ and for every $s, t \in [-T,T]$, 
the map $(s,t) \mapsto X_{s,t}(x)$ is continuous. Moreover, for a.e.\ $x \in \R^d$ one has 
\begin{align*}
\partial_t X_{s,t}(x)  = V^2( X_{s,t}(x) )  & \quad \text{for a.e.\ $t \in (-T,T)$,}\\
\partial_s X_{s,0}(x) = V^1(X_{s,0}(x))  & \quad \text{for a.e.\ $s \in (-T,T) $.}
\end{align*} 

Now, for $s$, $s'$, $t \in [-T,T]$, we write
\[ A_{s, s'; t}(x)  :=  X_{s',t}(x) - X_{s,t}(x), \quad  B_{s, s'; t}(x)  :=  A_{s, s'; t}(x) -  (s'-s) V^1( X_{s,t}(x) ),\]
frequently omitting the reference to $x$ for brevity, and for every $t\in [-T,T]$ we consider the 
measure 
$\eta:=\hat{X}_{t\#}\nu$ over $C([-T,T],\R^d)$, where
$\hat{X}_t\colon \R^d\to C([-T,T],\R^d)$ is defined by the formula
\[
\hat{X}_t(x):= X_{(\cdot, t)}(x).
\]
Note that $\eta$ is concentrated over curves $ [-T,T] \ni s \mapsto X_{s,t}(x)$ for a set of full $\nu$-measure of $x\in \R^d$.  
Further,
\[
e_{s\#}\eta = X_{s,t\#}\nu =  F^2_{t\#} (F^1_{s\#} \nu) \leq C_2C_1\mathcal{L}^d,
\]
where $C_i$ are the compressibility constants for $F^i$, $i=1,2$.

We will show that for some function $(\omega_{ss'})_{s, s' \in [-T,T]}$, with $[\omega] = 0$ one has
\begin{equation}\label{eq_Frob_main1a}
\nor{B_{s,s';t}(\cdot)}_{L^1(\nu)} \lesssim \omega_{ss'}, \quad \text{for $s, s' \in [-T,T]$.}
\end{equation}
This would give
\[
\nor{\theta_{s'}-\theta_s - (s'-s) V^1(\theta_s)  }_{L^1(C([-T,T],\R^d);\eta)}  \lesssim \omega_{ss'}
\]
for every $s, s' \in [-T,T]$, which means in view of Proposition~\ref{prop:taylor-first-order} 
(applied with $V^1$ instead of $V$ and $C_1C_2$ instead of $C$) that 
$\eta$-a.e.\ $\theta\in C([-T,T],\R^d)$ is an integral curve of $V^1$. In other words, for
$\nu$-a.e.\ (hence for $\mathcal{L}^d$-a.e., because  the density of $\nu$ is strictly positive) 
$x\in \R^d$ the curve $\theta$ defined by $\theta(s):=X_{s,t}(x)$ satisfies $\partial_s\theta= V^1(\theta(s))$ for every
$s\in [-T,T]$. Since for such curves one has $\theta(0)=F^2_t(x)$, then for a.e. $x\in \R^d$ one has
\[X_{s,t}(x)=F^1_s(F^2_t(x)),\] which is the claim of the theorem being proven.

The rest of the proof is therefore dedicated to showing the claim~\eqref{eq_Frob_main1a}. It will be done in six steps.

\emph{Step 1: Estimate on $A_{s,s';0}$.} 
In the subsequent estimates we suppose without loss of generality that $s'\geq s$ (the case $s\geq s'$ is completely symmetric).
One has 
\begin{equation*}\label{eq_estAs0a}
\begin{aligned}
|A_{s,s';0}(x) |^{p}&=|X_{s',0}(x) - X_{s,0}(x)|^{p}= |F^1_s(x)-F^1_{s'}(x)|^{p} \\
 &\leq \left| \int_s^{s'} |V^1(y(\tau))|\,d\tau \right|^{p} \\
 & \leq |s-s'|^{p/p'}\int_s^{s'} |V^1(y(\tau))|^{p}\,d\tau\quad\mbox{by H\"{o}lder inequality}.
\end{aligned}
\end{equation*}
Integrating with respect to $\tilde{\eta}:=\nu\res B\otimes \delta_{F^1(\cdot,x)}$, where $B\subset\R^d$ is an arbitrary fixed Borel set, gives
\begin{equation}\label{eq_estAs0b}
\begin{aligned}
\int_{B} |A_{s,s';0}(x) |^{p}\d\nu(x)& \leq |s-s'|^{p/p'}\left| \int_{C([-T,T];\R^d)} \, \d\tilde{\eta}(y)\int_s^{s'} |V^1(y(\tau))|^{p}\,\d\tau\right| \\ 
&= |s-s'|^{p/p'}\left|\int_s^{s'} \d\tau \int_{C([-T,T];\R^d)} \, \d\tilde{\eta}(y) |V^1(y(\tau))|^{p}\right|\\
& = |s-s'|^{p/p'}\left|\int_s^{s'} \,\d\tau \int_{\R^d} \, \d e_{\tau\#}\tilde{\eta}(x) |V^1(x)|^{p}\right|\\
&\leq C\|\nu\|_\infty |s-s'|^{1+p/p'} \int_{F^1_\tau(B)} |V^1(x)|^p\, \d x,
\end{aligned}
\end{equation}
because $e_{\tau\#}\tilde{\eta}= F^1_{\tau\#}(\nu\res B)= \mathbf{1}_{F^1_\tau(B)}F^1_{\tau\#}\nu
$ and in view of~\eqref{eq_compress1}. In particular, with $B:=\R^d$ we get
\begin{equation}\label{eq_estAs0c}
\begin{aligned}
\|A_{s,s';0}(x) \|_{L^{p}(\nu)}& \lesssim |s-s'|^{1/p+1/p'}\left\|V^1\right\|_{p} = |s-s'|\left\|V^1\right\|_{p}. 
\end{aligned}
\end{equation}
Further, since 
$F^1_\tau(B)=(F^1_{-\tau})^{-1}(B)$,
we get 
\[
\mathcal{L}^d(F^1_\tau(B))=(F^1_{-\tau\#}\mathcal{L}^d)(B)\leq C\mathcal{L}^d(B),
\]
and hence from~\eqref{eq_estAs0b} with $p=1$ we get that the functions $A_{s,s';0}(\cdot)/|s-s'|$ are equiintegrable
in $L^1(\nu)$, if $V^1\in L^1(\R^d)$.


\emph{Step 2: Estimate on $A_{s,s';t}$.} 
Lipschitz continuity of $F^2_t$ gives, for a.e.\ $x \in \R^d$,
\[\partial_t |A_{s, s'; t}| \le | V^2( X_{s,t}) - V^2( X_{s',t})| \le  \nor{\DD V^2}_\infty | A_{s,s'; t} |, \quad \text{for a.e.\ $t \in [-T,T]$.}\]
Hence, by Gronwall lemma,
\[ \sup_{t \in [-T,T]} |A_{s,s'; t}|  \le |A_{s,s'; 0}|\exp\bra{ \nor{\DD V^2}_\infty T}. 
\]
Integrating with respect to $\nu$ gives
\begin{equation}\label{eq:gronwall-a} \begin{split} 
\sup_{t \in [-T,T]} \nor{   |A_{s,s'; t}| }_{L^{p}(\nu)} &\le
\nor{  \sup_{t \in [-T,T]} |A_{s,s'; t}| }_{L^{p}(\nu)} \\
&\le   \nor{  A_{s,s';0} }_{L^{p}(\nu)}\exp\bra{ \nor{\DD V^2}_\infty T}\\
& \lesssim |s'-s| \nor{V^1}_{p}  \exp\bra{ \nor{\DD V^2}_\infty T}
\quad\mbox{in view of~\eqref{eq_estAs0c}}\\
& \lesssim |s'-s|.
\end{split}\end{equation}

\emph{Step 3: Integral inequality for $B_{s,s';t}$}. 
We first prove that 
\begin{itemize}
	\item[(i)] for a.e.\ $x \in \R^d$ and for every $s \in [-T,T]$ the curve
	$ t \mapsto V^1(X_{s,t}(x))$ 
	is absolutely continuous, and 
	\item[(ii)] satisfies 
	\begin{equation}\label{eq_FrobDV1a}
	\partial_t V^1(X_{s,t}(x)) = \DD V^1( X_{s,t}(x)) V^2(X_{s,t}(x)).
	\end{equation}
\end{itemize}
In fact, for a.e.\ $x \in \R^d$ and for every $s \in [-T,T]$ the curve
$t \mapsto X_{s,t}(x)$ is absolutely continuous (even Lipschitz). 
Letting 
\begin{align*}
\nu^1_s := F^1_{s\#}\nu, \quad \eta^2_s:= F^2_{\#} \nu^1_s,
\end{align*}
we have that $\eta^2_s$ is a measure on $C([-T,T];\R^d)$ concentrated over such curves, while
\[
\nu^1_s\leq C_1 \|\nu\|_\infty \mathcal{L}^d
\]
by Remark~\ref{rm_FrobCompr1}, and therefore
\[
e_{t\#}\eta^2_s=F^2_{t\#} \nu^1_s\leq C_2C_1 \|\nu\|_\infty\mathcal{L}^d
\]
by the same Remark. Therefore,~(i) and ~(ii) follow from Lemma~\ref{lm_chainrule_sobolev_curves} with $\eta^2_s$ in place of $\eta$ and
$V^1$ in place of $V$.


Note that since $[V^1,V^2](x)=0$ for a.e. $x\in \R^d$, then by Lemma~\ref{lm_LagrFlowNeglset} there 
is a $B_0\subset \R^d$ with $\mathcal{L}^d(B_0)=0$ such that
$[V^1,V^2](X_{s,t}(x))=0$ for every $x\not\in B_0$ and 
a.e. $s,t\in [-T,T]$.  

Using~\eqref{eq_FrobDV1a} we obtain
\[ \begin{split} \partial_t  B_{s, s'; t} & = \partial_t A_{s, s'; t} - (s'-s) \DD V^1( X_{s,t} ) V^2 (X_{s,t}), \\
& = \partial_t A_{s,s'; t} - (s'-s) \DD V^2( X_{s,t} ) V^1 (X_{s,t}) \quad \text{ (since $[V^1, V^2](X_{s,t}) =0$ a.e.)}\\
& = \DD V^2( X_{s,t}) B_{s, s'; t} + R_{s,s'; t},
\end{split}\]
where 
\[ R_{s,s'; t} := V^2( X_{s',t}) - V^2( X_{s,t}) - \DD V^2(X_{s,t}) ( X_{s',t} - X_{s,t}).\] 
At this point Gronwall lemma yields
\begin{equation}\label{eq_frobEstB1a}
\sup_{t \in [-T,T]} |B_{s, s'; t}(x)|  \le \bra{ |B_{s, s'; 0}(x)| + \int_{-T}^T  |R_{s,s';\tau}(x)| \d \tau} \exp\bra{ \nor{\DD V^2}_\infty T},
\end{equation}
which implies
\begin{equation}\label{eq_frobEstB1b}
\sup_{t \in [-T,T]} \|B_{s, s'; t}\|_{L^1(\nu)}  \le \bra{ \|B_{s, s'; 0}\|_{L^1(\nu)} + 
	\int_{-T}^T  \|R_{s,s';\tau}\|_{L^1(\nu)} \d \tau} \exp\bra{ \nor{\DD V^2}_\infty T}.
\end{equation}

\emph{Step 4: Estimate on $B_{s,s';0}$.} To estimate $\|B_{s,s'; 0}\|_{L^1(\nu)}$ we use Proposition~\ref{prop:taylor-first-order} with $V^1$ instead of $V$, $p_0:=p$, $p_1:=p'$, $q:=1$, obtaining
\begin{equation}\label{eq_frobEstB1c}
\nor{ B_{s, s'; 0}}_{L^1(\nu)} \lesssim |s'-s|^2.
\end{equation}

\emph{Step 5: Estimate on $R_{s,s';\tau}$.} We use~\eqref{eq:pointwise-sobolev-sharp} with the components of $V^2$ instead of $f$,
getting
\[ \abs{ V^2(y) - V^2(z) - \DD V^2(z) ( y-z)} \lesssim |y-z| \bra{ (\DD V^2)^\sharp_{|y-z|}(y) +(\DD V^2)^\sharp_{y-z}(z)},\]
and choose $z := X_{s,t}(x)$, $y := X_{s',t}(x)$, so that
\[
R_{s,s';t}(x)  \lesssim A_{s,s';t}(x) \bra{ (\DD V^2)^\sharp_{|A_{s,s';t}(x)|}(X_{s',t}(x)) +(\DD V^2)^\sharp_{|A_{s,s';t}(x)|}(X_{s,t}(x))}.
\]
Integrating the above inequality over $x\in \R^d$ with respect to $\nu$ and dividing by $|s-s'|$, we arrive at the estimate 
\begin{equation}\label{eq_RAD1a}
\begin{split} \int_{-T}^T &\frac{\nor{ R_{s,s';\tau}}_{L^1(\nu)}}{|s-s'|}\d \tau   \lesssim \int_{-T}^T \d \tau\int_{\R^d}\frac{A_{s,s';\tau}(x)}{|s-s'|}D_{s,s';\tau}(x) \d \nu (x), \quad\mbox{where}\\
& D_{s,s';\tau}(x):=\bra{ (\DD V^2)^\sharp_{|A_{s,s';\tau}(x)|}(X_{s',\tau}(x)) +(\DD V^2)^\sharp_{|A_{s,s';\tau}(x)|}(X_{s,\tau}(x))}.
\end{split}
\end{equation}
We will show that 
one has
\begin{eqnarray}
\label{eq_FrobDV2int1a}
\int_{-T}^T \d \tau \int_{\R^d} 
(\DD V^2)^\sharp_{|A_{s,s';\tau}(x)|}(X_{s',\tau}(x)) 
\d\nu (x) \to 0,\\
\label{eq_FrobDV2int1b}
\int_{-T}^T \d \tau \int_{\R^d} 
(\DD V^2)^\sharp_{|A_{s,s';\tau}(x)|}(X_{s,\tau}(x)) 
\d\nu (x)  \to 0,
\end{eqnarray}
which means in particular that the functions $\tilde D_{s,s'}$ defined by $(\tau, x)\mapsto D_{s,s';\tau}(x)$ converge to zero 
in $L^1(\R^d\times(-T,T),\nu\otimes \mathcal{L}^1)$, hence
in measure
$\nu\otimes \mathcal{L}^1\res[-T,T]$
as $\delta:=s'-s\to 0^+$, uniformly in $s\in [-T,T]$. 
Minding that these functions are uniformly bounded (since the maximal function
$(\DD V^2)^\sharp$
is bounded by $\|DV^2\|_\infty$), and the functions $\tilde A_{s,s'}$ defined by $(\tau, x)\mapsto A_{s,s';\tau}(x)/|s-s'|$
are uniformly bounded in $L^1(\R^d\times(-T,T),\nu\otimes \mathcal{L}^1)$ and equiintegrable by Step~1,
then choosing arbitrary sequences $s_k, s_k'\in [-T,T]$ with $\delta_k:=s_k'-s_k\to 0^+$
from Lemma~\ref{lm_weak_meas_conv1} (with $f_k:=\tilde A_{s_k,s_k'}$, $g_k:=\tilde D_{s_k,s_k'}$, $E:=\R^d\times(-T,T)$, $\mu:=\nu\otimes \mathcal{L}^1$) and
from~\eqref{eq_RAD1a} one gets
\begin{equation}\label{eq_frobEstB1d}
\int_{-T}^T \nor{ R_{s,s'; \tau}}_{L^1(\nu)}\d \tau \lesssim |s'-s| o(1).
\end{equation}
Plugging~\eqref{eq_frobEstB1c} and~\eqref{eq_frobEstB1d} into~~\eqref{eq_frobEstB1b}, we get
\begin{equation*}\label{eq_frobEstB1e}
\sup_{t \in [-T,T]} \|B_{s, s'; t}\|_{L^1(\nu)}  \lesssim  o(|s'-s|)
\end{equation*}
as $s'-s\to 0^+$,
which gives the thesis.

{\em Step 6}. It remains to prove~\eqref{eq_FrobDV2int1a} and~\eqref{eq_FrobDV2int1b}.
In order to verify~\eqref{eq_FrobDV2int1a},
let $\nu_s$ be as in Step~3, i.e.\ $\nu_s := (F^1_s)_\sharp \nu$, so that a change of variables  gives
\[ \int_{\R^d} (\DD V^2)^\sharp_{|A_{s,s';\tau}(\cdot)|}(X_{s',\tau}(\cdot)) \d\nu = \int_{\R^d} (\DD V^2)^\sharp_{|A_{0,s'-s;\tau}(\cdot)|}(X_{s'-s,\tau}(\cdot)) \d\nu_s.
\]
Thus we simply observe that
\[
\int_{\R^d} 
(\DD V^2)^\sharp_{|A_{0,\delta;\tau}(x)|})(X_{\delta,\tau}(x)) 
\d\nu_s(x) \to 0
\]
as $\delta:=s'-s\to 0^+$ uniformly in $s\in [-T,T]$ by Lemma~\ref{lm_domconvmeas1} with
$g_\delta(x):= 
(\DD V^2)^\sharp_{|A_{0,\delta;\tau}(x)|}(X_{\delta,\tau}(x)) 
$
(note that these functions are bounded by $\|\DD V^2\|_\infty$ since so are the maximal functions
$(\DD V^2)^\sharp$ and $\lim_{\delta\to 0^+}g_\delta =0$ by Remark~\ref{rem:sharp-maximal}).
Therefore by Lebesgue dominated convergence theorem we get
\[ \int_{0}^T \d\tau \int_{\R^d}(\DD V^2)^\sharp_{|A_{0,s'-s;\tau}(x)|}(X_{s'-s,\tau}(x))  \d \nu_s(x) \to 0\]
uniformly in $s\in [-T,T]$ as $s'-s\to 0^+$. 
The proof of~\eqref{eq_FrobDV2int1b} is completely analogous, and we can conclude the proof of the theorem.
\end{proof}

We give also the local version of the above Theorem~\ref{thm:frobenius}.

\begin{theorem}\label{thm:frobenius_local_Lploc}
	For $p \in [1, \infty]$ assume that 
	$V^1 \in W^{1,p}_{\loc}(\Omega;\R^d)$ 
	for some open set $\Omega\subset \R^d$ 
	is such that its regular Lagrangian flow $F^1_s(x)$ is uniquely defined over $(-\bar T(x), \bar T(x))$ for some measurable function
	$\bar T\colon \Omega\to (0,+\infty)$,
	and 
	$V^2 \in W^{1,\infty}_{\loc} (\Omega;\R^d)$.
	If 
	\[ [V^1, V^2](x) = 0\quad\mbox{for a.e.\ $x \in \Omega$},\]
	then for a.e. $x \in \Omega$ there is a $T=T(x)>0$ such
	that the regular Lagrangian flows $F^1$, $F^2$ satisfy
\begin{equation}\label{eq_flow_comm1}
	F^2_t \bra{ F^1_s (x) }= F^1_s\bra{ F^2_t (x)} \quad \text{for every $s$, $t \in [-T,T]$.}
\end{equation}
\end{theorem}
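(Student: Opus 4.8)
The plan is to derive Theorem~\ref{thm:frobenius_local_Lploc} from the global Theorem~\ref{thm:frobenius} by a localization and gluing argument. Since the statement is local and only asks for commutativity on a small time interval around a.e.\ point, the strategy is to cover $\Omega$ by small balls, modify the vector fields outside slightly larger balls so that they become globally defined Sobolev/Lipschitz fields with vanishing Lie bracket, apply the global theorem, and then transfer the conclusion back using uniqueness of the (regular Lagrangian) flows and the finite speed of propagation inherited from the Lipschitz field $V^2$.

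More concretely, first I would fix a ball $B = B_r(x_0)$ with $\overline{B_{3r}(x_0)} \subset \Omega$ and choose a cutoff $\chi \in C_c^\infty(B_{2r}(x_0))$ with $\chi \equiv 1$ on $B_r(x_0)$. One is tempted to set $\tilde V^i := \chi V^i$, but this destroys the vanishing of the Lie bracket, so instead I would use the standard trick of multiplying \emph{both} fields by the same cutoff, i.e.\ work with $\tilde V^i := \chi V^i$ and note $[\tilde V^1, \tilde V^2] = \chi^2 [V^1,V^2] + \chi(\text{terms with } \nabla\chi)$; the cross terms do not vanish, so this does not directly work either. The clean way is: on the region $\{\chi \equiv 1\}$ the modified fields agree with the originals, hence their flows agree there for short times (before trajectories can leave $B_r(x_0)$), and on that region the Lie bracket of the modified fields equals that of the originals, which is zero a.e.; what one actually needs is that the \emph{PDE argument} of Theorem~\ref{thm:frobenius}, which is entirely integral/measure-theoretic, only ever evaluates $[V^1,V^2]$ along trajectories, and by the finite-speed-of-propagation bound $|F^2_t(y) - y| \le \|V^2\|_{L^\infty(B_{2r})} |t|$ together with the analogous (a priori, from the RLF property and local boundedness) control on $F^1$, one can choose $T = T(x_0,r) > 0$ small enough that for $\nu$-a.e.\ $x \in B_r(x_0)$ and all $|s|,|t| \le T$ the point $X_{s,t}(x)$ stays in $B_{2r}(x_0)$. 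Thus I would simply re-run the proof of Theorem~\ref{thm:frobenius} with $\nu$ replaced by $\nu \llcorner B_r(x_0)$ (a finite measure with bounded density), all spatial integrals restricted to $B_{2r}(x_0)$, the $\|\cdot\|_p$ norms replaced by $\|\cdot\|_{L^p(B_{2r}(x_0))}$ and $\|DV^2\|_\infty$ by $\|DV^2\|_{L^\infty(B_{2r}(x_0))}$, and the pointwise Sobolev and maximal-function estimates \eqref{eq:pointwise-sobolev-sharp}, \eqref{eq_sharpmax_est1} applied to a $W^{1,p}$ extension of $V^1|_{B_{2r}}$ and to $V^2|_{B_{2r}}$; all of these are finite because of the local integrability hypotheses.

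The key steps, in order, are: (1) fix a ball $B_r(x_0)$ compactly contained in $\Omega$ together with the intermediate ball $B_{2r}(x_0)$; (2) using that $V^2$ is locally Lipschitz and $V^1$ is a regular Lagrangian flow with compressibility constant locally controlled, produce a time $T>0$ and a full-$\nu$-measure subset of $B_r(x_0)$ on which the composed trajectories $X_{s,t}(x)$ remain in $B_{2r}(x_0)$ for $|s|,|t|\le T$ --- here one invokes Lemma~\ref{lm_LagrFlowNeglset} to discard the negligible set where $F^1$ hits a bad set, and the a.e.\ finiteness of $\bar T$ plus its measurability to ensure $T$ can be taken uniform on a large portion of $B_r(x_0)$; (3) observe that all quantities appearing in Steps~1--6 of the proof of Theorem~\ref{thm:frobenius} --- the $A$, $B$, $R$ terms, the Gronwall constants, the maximal functions --- only see the values of $V^1$, $V^2$ and $[V^1,V^2]$ on $B_{2r}(x_0)$, so the entire argument goes through verbatim with the localized data and yields $X_{s,t}(x) = F^1_s(F^2_t(x))$ for $\nu$-a.e.\ $x\in B_r(x_0)$ and all $|s|,|t|\le T$; (4) cover $\Omega$ by countably many such balls and take the union of the resulting full-measure sets, with $T(x)$ the value attached to any ball containing $x$, to conclude \eqref{eq_flow_comm1} for a.e.\ $x\in\Omega$.

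The main obstacle I expect is Step~(2): making precise, with only a measurable lower bound $\bar T$ on the existence time of $F^1$ and only local compressibility, that the trajectories $s,t \mapsto X_{s,t}(x)$ genuinely stay inside $B_{2r}(x_0)$ for a uniform positive time on a set of nearly full measure in $B_r(x_0)$. For $F^2$ this is immediate from the Lipschitz (indeed $L^\infty$) bound, giving $|F^2_t(y)-y|\le \|V^2\|_{L^\infty(B_{2r})}|t|$; for $F^1$ one does not have an $L^\infty$ bound on $V^1$ in general, so one must argue via the regular Lagrangian flow property --- e.g.\ by the usual superlevel-set estimate $\nu(\{x : \sup_{|s|\le T}|F^1_s(x)-x| > \rho\})$ controlled using $F^1_{s\#}\nu \le C\nu$ and the local integrability of $V^1$ --- to get that, outside a set of small $\nu$-measure, $F^1_s(x)$ stays in a slightly larger ball, and then iterate/refine $r$ and $T$. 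Once this containment is secured, the rest is a routine, if slightly tedious, rerun of the already-proved global argument with localized norms, so I do not anticipate further essential difficulties.
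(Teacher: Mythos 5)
Your overall strategy -- localize to a ball, observe that the entire integral/measure-theoretic argument of Theorem~\ref{thm:frobenius} only ever evaluates $V^1$, $V^2$, $[V^1,V^2]$ along trajectories that remain inside a fixed ball, re-run the global proof with a localized reference measure and localized norms, then cover $\Omega$ by countably many balls -- is essentially the one the paper uses. The step where you diverge (and where your plan has a real soft spot) is the containment of the $F^1$-trajectories. You propose a superlevel-set estimate of the form $\nu\bigl(\{x : \sup_{|s|\le T}|F^1_s(x)-x| > \rho\}\bigr) \lesssim \frac{1}{\rho}\int\int_{-T}^T |V^1(F^1_s(x))|\,\mathrm{d}s\,\mathrm{d}\nu$, ``controlled using $F^1_{s\#}\nu \le C\nu$ and the local integrability of $V^1$''. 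But $V^1$ is only $L^p_{\loc}$, so bounding that integral by the compressibility constant already requires knowing that $F^1_s(x)$ stays in a region where $V^1$ is integrable -- which is precisely the containment you are trying to establish. This circularity is fixable by a stopping-time argument, but the paper bypasses it entirely: it simply \emph{defines} $M_{B,T_1}$ to be the (Borel, possibly empty) set of $x \in B$ for which $F^1_s(x)$ exists and stays in $B$ for all $|s| \le T_1$, and observes at the very end that the sets $\{M_{B,1/k}\}_{k \in \N}$ cover $B$ up to a null set, because $\bar T(x) > 0$ for a.e.\ $x$ and the trajectories are continuous. For a fixed $T_1$ this set may be far from full measure (you alternately write ``full-$\nu$-measure subset'' and ``a large portion''; it is only full measure in the union over $T_1 \to 0$). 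The containment for $F^2$ is the elementary $L^\infty$ bound you state, codified in the paper as Lemma~\ref{lm_apriori_ODE1}. One other point in your favor: you correctly identify and dismiss the cutoff trick as destroying the vanishing of the Lie bracket, and you are more explicit than the paper about the need to extend $V^i|_{B_{2r}}$ globally to use the maximal-function machinery; the paper leaves this implicit, relying on the observation that the argument only sees the fields along the localized trajectories.
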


\begin{proof}
	For every ball  $B:=B_R(x_0)$ such that $\bar B\subset\Omega$ and every $T_1 >0$ we find a $\rho>R$ with the property that
	$B_\rho(x_0)\subset\Omega$.
	Let $M_{B,T_1}\subset B$
	stand for the (possibly empty) Borel set of points $x\in B$ such that $F^1_s(x)$ is defined for all
	$s\in [-T_1,T_1]$ and stays in $B$. 
By Lemma~\ref{lm_apriori_ODE1} applied with $V^2$ in place of $V$ there is a $T_2>0$ (depending on $B$) such that for all $x\in B$ the classical flow induced by $V^2$ (which is equal to the regular Lagrangian flow $F^2$) is defined for all $t\in [-T_2,T_2]$ and stays inside $B_\rho(x_0)$. We set then $T:=T_1\wedge T_2$, so that
clearly $T=T(B,T_1)$.
In this way the map~\eqref{eq_Xst1a} is defined over $M_{B,T_1}$ for all $t,s\in [-T,T]$, and takes values in $B_\rho(x_0)$.
Note now that in the proof of Theorem~\ref{thm:frobenius} one only evaluates the Lie bracket $[V_1,V_2]$ 
along the trajectories of ODEs with right-hand sides $V_1$, $V_2$. The latter, if started 
at a point of $M_{B,T_1}$ remain in $B_\rho(x_0)$, so that  $[V_1,V_2]$ vanishes along them. 
Thus reiterating the proof of Theorem~\ref{thm:frobenius}
with $\nu:=\mathcal{L}^d(M_{B,T_1})$ we get that
for a.e. $x\in M_{B,T_1}$
the regular Lagrangian flows $F^1$, $F^2$ commute, i.e.~\eqref{eq_flow_comm1} holds. 
Representing now $\Omega$ as a disjoint union of balls $\Omega:=\sqcup_j B_j$ such that $\bar B_j\subset\Omega$ for all $j\in \N$, 
one has that~\eqref{eq_flow_comm1} holds for a.e. $x\in   M_{B_j,1/k}$ with $T=T(B_j, 1/k)$, and since for all $j$
the sets $\{M_{B_j,1/k}\}_{k\in\N}$ cover almost all $B_j$, the claim follows.
\end{proof}

\begin{remark}
	If under conditions of Theorem~\ref{thm:frobenius_local_Lploc} one has additionally that $V^1 \in L^\infty_{\loc}(\Omega)$, 
	then one can assume that $T\in L^\infty_{loc}(\Omega)$. In fact, 
	for every
$x_0\in \Omega$, $R>0$, $\rho>R$ as in the above proof, letting $\rho_1\in (R, \rho)$
apply Lemma~\ref{lm_apriori_ODE1} with $V^1$ instead of $V$ and $\rho_1$ instead of $\rho$ to get a number
$T_1>0$ such that $F^1_s(x)$ is defined for $s\in [-T_1, T_1]$ and $x\in B_R(x_0)$ and maps almost all of 
$B:=B_R(x_0)$ into $B_{\rho_1}(x_0)$ for every $s\in [0, T_1]$. Applying again 
Lemma~\ref{lm_apriori_ODE1} now with $V^2$ in place of $V$ and $\rho_1$ in place of $R$ we get the existence of a $T_2>0$ be such that for all $x\in B_{\rho_1}(x_0)$ the classical flow induced by $V^2$ (which is equal to the regular Lagrangian flow $F^2$) is defined for all $t\in [-T_2,T_2]$ and stays inside $B_\rho(x_0)$. We set then $T:=T_1\wedge T_2$, so that
clearly $T=T(B)$. One has then that
	the	regular Lagrangian flows $(F^1_t)_{t \in [-T,T]}$, $(F^2_t)_{t \in [-T,T]}$
		defined for  a.e. $x\in B$ commute,  i.e.,~\eqref{eq_flow_comm1} holds for every $s$, $t \in [-T,T]$.
The claim follows by representing $\Omega$ as a disjoint union of balls $\Omega:=\sqcup_j B_j$ such that $\bar B_j\subset\Omega$ for all $j\in \N$: in fact, every relatively compact subset of $\Omega$ is covered by only a finite number of $B_j$.
\end{remark}

The following lemma has been used in the above proof.

\begin{lemma}\label{lm_apriori_ODE1}
Let $\Omega\subset \R^d$ be an open set and
the vector field $V\in L^\infty_{\loc}(\Omega;\R^d)$ be such that
the respective regular Lagrangian flow $(t,x)\mapsto F_t(x)$ is defined for 
all $t\in (-\bar T, \bar T)$ whenever $x\in B$, $\bar B\subset \Omega$, for some
$T=T(B)>0$. 
  Then for every $B_\rho(x_0)$, and $0<R<\rho$  
there is a $T\in (0, \bar T(B_R(x_0)))$ such that
$F_t(x)\in B_\rho(x_0)$ for a.e.\ $x\in B_R(x_0)$ and all $t\in [-T,T]$.
\end{lemma}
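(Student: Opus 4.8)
The statement is a purely local a priori bound on the flow trajectories, and the plan is to run the classical continuation argument for an ODE whose right-hand side is bounded on the relevant compact set. First I would fix an intermediate radius $\rho_1 \in (R,\rho)$, so that $\bar B_{\rho_1}(x_0)$ is a compact subset of $\Omega$ (this uses only that $B_\rho(x_0) \subset \Omega$), whence $M := \nor{V}_{L^\infty(B_{\rho_1}(x_0))} < \infty$. Since also $\bar B_R(x_0) \subset B_{\rho_1}(x_0) \subset \Omega$, the hypothesis of the lemma applies to $B := B_R(x_0)$: for every $x \in B_R(x_0)$ the flow $F_t(x)$ is defined for $t \in (-\bar T, \bar T)$, where $\bar T := \bar T(B_R(x_0)) > 0$. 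I would then set
\[
T := \min\cur{ \tfrac{\bar T}{2},\ \frac{\rho_1 - R}{2(M+1)} } \in (0, \bar T),
\]
the replacement of $M$ by $M+1$ serving only to cover the degenerate case $M = 0$.

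The core of the argument concerns a fixed $x$ lying in the full-measure subset of $B_R(x_0)$ for which $\gamma := F_{\cdot}(x)$ is absolutely continuous on $(-\bar T, \bar T)$ with $\dot\gamma(s) = V(\gamma(s))$ for a.e.\ $s$; moreover, applying Lemma~\ref{lm_LagrFlowNeglset} to a Lebesgue-negligible set $N \supset \cur{ y \in B_{\rho_1}(x_0) : |V(y)| > M }$ outside of which $|V| \le M$, I may restrict $x$ to an even smaller full-measure set along whose trajectory $\gamma(s) \notin N$ for a.e.\ $s$, so that $|\dot\gamma(s)| \le M$ for a.e.\ $s$ with $\gamma(s) \in B_{\rho_1}(x_0)$. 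Let $\tau := \sup\cur{ t \in [0,T] : \gamma(s) \in B_{\rho_1}(x_0)\ \text{for all}\ s \in [0,t] }$, which is strictly positive since $B_{\rho_1}(x_0)$ is open and $\gamma(0) = x \in B_R(x_0)$. For $t \in [0,\tau)$, integrating the ODE gives
\[
|\gamma(t) - x_0| \le |x - x_0| + \int_0^t |\dot\gamma(s)|\,\d s < R + MT < \rho_1,
\]
the last inequality holding by the choice of $T$; by continuity of $\gamma$ the strict bound $|\gamma(\tau) - x_0| < \rho_1$ persists at $t = \tau$, so that $\tau < T$ would contradict the maximality of $\tau$. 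Hence $\tau = T$ and $\gamma(t) \in B_{\rho_1}(x_0) \subset B_\rho(x_0)$ for all $t \in [0,T]$; the interval $[-T,0]$ is treated identically using $|\gamma(t) - x| \le M|t|$. Since all this holds for a.e.\ $x \in B_R(x_0)$, the lemma follows.

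The only genuinely delicate point is guaranteeing that the speed bound $|\dot\gamma(s)| = |V(\gamma(s))| \le M$ holds for a.e.\ time along a.e.\ trajectory, rather than just a.e.\ in space: a priori a trajectory could spend a positive amount of time inside the null set on which the chosen representative of $V$ exceeds $M$. This is exactly what Lemma~\ref{lm_LagrFlowNeglset} rules out, and that lemma in turn uses the compressibility condition~(ii) in the definition of a regular Lagrangian flow (in the present local setting one runs the same Fubini argument with $B_R(x_0)$ in place of $\R^d$). Everything else is the textbook open-interval bootstrap for the exit time, and I expect no further obstacle.
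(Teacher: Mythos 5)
Your proof is correct and follows essentially the same strategy as the paper's: a choice of $T$ small relative to $(\rho-R)/\|V\|_{\infty}$, followed by an exit-time argument showing the trajectory cannot reach $\partial B_\rho(x_0)$ before time $T$. The only differences are cosmetic or in your favour — the intermediate radius $\rho_1$ is not strictly needed (the paper estimates directly with $\|V\|_{L^\infty(B_\rho(x_0))}$), and your explicit invocation of Lemma~\ref{lm_LagrFlowNeglset} to guarantee $|\dot\gamma(s)|\le M$ for a.e.\ $s$ along a.e.\ trajectory makes rigorous a step the paper's proof leaves implicit when it bounds $\abs{F_{\bar t}(x)-x_0}$ using the $L^\infty$ norm of $V$ inside the integral.
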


\begin{proof}
	Take a positive
	\[
	T<\frac{\rho-R}{\|V\|_{L^\infty(B_{\rho}(x_0))}}\wedge \bar T(B_R(x_0)).
	\]
	We have then that $F_t$ maps almost all of $B_R(x_0)$ into $B_{\rho}(x_0)$ for every $t\in [-T, T]$.
	We show this for an arbitrary $t\in [0, T]$, the remaining case being completely symmetric.
	In fact, otherwise there is an $x\in B_{\rho}(x_0)$ and $y\in C([0,\bar t];\R^d)$, $y(t):= F_t(x)$
	such that $y(t)\in B_{\rho}(x_0)$ for all $t\in [0,\bar t)$, $|y(\bar t)-x_0|=\rho$
	and
	$\bar t \leq T$. But	
	\[
	F_t(x)= x+\int_0^t V(x(\tau))\d \tau,
	\]
	so that 
	\[
	|F_{\bar t}(x)-x_0|\leq R + \bar t \|V\|_{L^\infty(B_{\rho}(x_0))}\leq  R + T \|V\|_{L^\infty(B_{\rho}(x_0))}< \rho,
	\]
	which is impossible because $|F^1_{\bar t}(x)-x_0|=|y(\bar t)-x_0|=\rho$ by assumption.
\end{proof}

\appendix

\section{Concentration on integral curves}

We prove here our main technical tool, namely, a criterion of when a measure 
over continuous curves is concentrated over
on integral curves of a given vector field $V$. Let us remark that it is based on a discrete formulation of the equation for integral curves. 

\begin{proposition}\label{prop:taylor-first-order}
	Let $p_0$, $p_1\in (1,+\infty)$ with $1/p_0+1/p_1 = 1/q \leq 1$.
	Assume that $V \in L^1\bra{[a,b]; L^{p_0}(\R^d)}$, $\DD V \in L^1\bra{[a,b]; L^{p_1}(\R^d)}$, where $a<0<b$ and $\eta$ be a finite measure on $C([a,b];\R^d)$ with $e_{t\#} \eta  \leq C\mathcal{L}^d$ for all $t\in [a,b]$. Then, $\eta$ is concentrated on integral curves of $V$, if and only if there exists a variation function $(\omega_{st})_{st \in [a,b]}$ with $[\omega] = 0$ 
	such that
	\begin{equation}\label{eq:taylor-first-order} \nor{ \theta_t - \theta_s - \bra{\int_s^t V_\tau \d \tau} (\theta_s) }_{L^q(C([a,b];\R^d),\eta)} \le \omega_{st} \quad \text{for every $s, t \in [a,b]$.}
	\end{equation}
	In such a case one can always choose
	\begin{equation}\label{eq:omega-1} \omega_{st} \lesssim C^{1/q} \bra{\int_s^t \nor{V_\tau}_{p_0} \d\tau } \bra{\int_s^t\nor{\DD V_\tau}_{p_1} \d\tau }.\end{equation}
\end{proposition}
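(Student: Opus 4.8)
The plan is to prove the easy direction first and then concentrate on the hard one. If $\eta$ is concentrated on integral curves of $V$, then for $\eta$-a.e.\ $\theta$ one has $\theta_t-\theta_s=\int_s^t V_\tau(\theta_\tau)\,\d\tau$, so that $\theta_t-\theta_s-(\int_s^t V_\tau\,\d\tau)(\theta_s)=\int_s^t\bra{V_\tau(\theta_\tau)-V_\tau(\theta_s)}\,\d\tau$. Taking $L^q(\eta)$-norms, using Minkowski's integral inequality to pull the norm inside the $\d\tau$-integral, and then bounding $\nor{V_\tau(\theta_\tau)-V_\tau(\theta_s)}_{L^q(\eta)}$ by means of the pointwise Sobolev inequality~\eqref{eq:pontiwise-sobolev-star} applied to the components of $V_\tau$, together with $e_{\tau\#}\eta,e_{s\#}\eta\le C\mathcal L^d$ and H\"older with exponents $p_0$, $p_1$, one gets~\eqref{eq:taylor-first-order} with the explicit $\omega_{st}$ in~\eqref{eq:omega-1} (and $[\omega]=0$ follows since $\tau\mapsto\int_0^\tau\nor{V_r}_{p_0}\d r$ and $\tau\mapsto\int_0^\tau\nor{\DD V_r}_{p_1}\d r$ are absolutely continuous, hence the product of increments is superadditive-controlled and sums to $0$ as $|\pi|\to0$).

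For the converse, the idea is a discretization-and-passage-to-the-limit argument. Fix $s<t$ and a partition $\pi=\{s=\tau_0<\tau_1<\dots<\tau_n=t\}$. Writing $\theta_t-\theta_s=\sum_i(\theta_{\tau_{i+1}}-\theta_{\tau_i})$ and using~\eqref{eq:taylor-first-order} on each subinterval, one obtains in $L^q(\eta)$ that $\theta_t-\theta_s$ is close, up to an error $\sum_i\omega_{\tau_i\tau_{i+1}}\le[\omega]_\pi\to0$, to the ``Riemann sum'' $\sum_i\bra{\int_{\tau_i}^{\tau_{i+1}}V_r\,\d r}(\theta_{\tau_i})$. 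The claim is that this Riemann sum converges in $L^q(\eta)$, as $|\pi|\to0$, to $\int_s^t V_r(\theta_r)\,\d r$. To see this, first note $\int_s^t V_r(\theta_r)\,\d r$ makes sense as an $L^q(\eta)$-valued Bochner integral since $\int_s^t\nor{V_r(\theta_\cdot)}_{L^q(\eta)}\d r\le C^{1/q}\int_s^t\nor{V_r}_{p_0}\d r<\infty$ by $e_{r\#}\eta\le C\mathcal L^d$ and H\"older; then one compares $\sum_i\bra{\int_{\tau_i}^{\tau_{i+1}}V_r\,\d r}(\theta_{\tau_i})$ with $\sum_i\int_{\tau_i}^{\tau_{i+1}}V_r(\theta_r)\,\d r=\int_s^t V_r(\theta_r)\,\d r$, the difference being $\sum_i\int_{\tau_i}^{\tau_{i+1}}\bra{V_r(\theta_{\tau_i})-V_r(\theta_r)}\,\d r$, whose $L^q(\eta)$-norm one estimates by the same pointwise-Sobolev plus H\"older bound, getting something like $C^{1/q}\bra{\int_s^t\nor{V_r}_{p_0}\d r}\cdot\sup_i\bra{\int_{\tau_i}^{\tau_{i+1}}\nor{\DD V_r}_{p_1}\d r}$, which tends to $0$ as $|\pi|\to0$ by absolute continuity of $r\mapsto\int_0^r\nor{\DD V_\sigma}_{p_1}\d\sigma$. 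Hence $\theta_t-\theta_s=\int_s^t V_r(\theta_r)\,\d r$ in $L^q(\eta)$, and since this holds for all $s,t$ in a countable dense set, and both sides depend continuously on $(s,t)$ for $\eta$-a.e.\ $\theta$ (the right side being absolutely continuous in $t$), it holds for all $s,t$ simultaneously for $\eta$-a.e.\ $\theta$; that is, $\eta$-a.e.\ curve is an integral curve of $V$.

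There is one technical subtlety to address carefully: the expression $\bra{\int_s^t V_\tau\,\d\tau}(\theta_s)$ involves evaluating the a.e.-defined $\R^d$-valued function $\int_s^t V_\tau\,\d\tau\in L^{p_0}(\R^d)$ at the point $\theta_s$, which is only meaningful because $e_{s\#}\eta\le C\mathcal L^d$, so that the $\eta$-a.e.\ value of $\theta\mapsto(\int_s^t V_\tau\,\d\tau)(\theta_s)$ is independent of the chosen representative; the same remark applies to $V_r(\theta_r)$ via $e_{r\#}\eta\le C\mathcal L^d$ and Fubini in $(r,\theta)$. I expect the main obstacle to be making the Riemann-sum convergence rigorous uniformly in the partition while keeping track of these representative issues — i.e.\ justifying the interchange of the $\d r$-integral with the $L^q(\eta)$-norm and the measurability of $(r,\theta)\mapsto V_r(\theta_r)$ — rather than any single estimate, each of which is a routine application of~\eqref{eq:pontiwise-sobolev-star}, H\"older's inequality, and the compressibility bound $e_{r\#}\eta\le C\mathcal L^d$.
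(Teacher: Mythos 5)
Your forward direction is correct and coincides with the paper's argument: Minkowski's integral inequality, the pointwise Sobolev bound~\eqref{eq:pontiwise-sobolev-star}, H\"older with exponents $p_0,p_1$, and the compressibility bound $e_{\tau\#}\eta\le C\mathcal{L}^d$ give exactly~\eqref{eq:omega-1}, and $[\omega]=0$ follows from the absolute continuity of $\tau\mapsto\int_0^\tau\nor{V_r}_{p_0}\,\d r$ and $\tau\mapsto\int_0^\tau\nor{\DD V_r}_{p_1}\,\d r$.

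Your converse direction is conceptually the same as the paper's (both reduce to showing that $t\mapsto\theta_t-\theta_0-\int_0^t V_\tau(\theta_\tau)\,\d\tau$ has vanishing total variation in $L^q(\eta)$ via a partition argument), but it has a genuine gap. When you estimate the Riemann-sum error
\[
\Bigl\|\sum_i\int_{\tau_i}^{\tau_{i+1}}\bigl(V_r(\theta_{\tau_i})-V_r(\theta_r)\bigr)\,\d r\Bigr\|_{L^q(\eta)}
\]
``by the same pointwise-Sobolev plus H\"older bound,'' the per-subinterval estimate requires an \emph{a priori} control of $\nor{\theta_r-\theta_{\tau_i}}_{L^{p_0}(\eta)}$ for $r\in[\tau_i,\tau_{i+1}]$, since the H\"older step pairs $|\theta_r-\theta_{\tau_i}|$ in $L^{p_0}$ against the sharp maximal functions of $\DD V_r$ in $L^{p_1}$ to land in $L^q$. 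In the forward direction this control is precisely~\eqref{eq:bound-gamma-ts}, which holds because $\eta$ is already known to be concentrated on integral curves. In the converse direction this is exactly what is not yet available, and the hypothesis~\eqref{eq:taylor-first-order} only gives information in $L^q(\eta)$ about $\theta_t-\theta_s$, not in $L^{p_0}(\eta)$. The paper addresses this by an explicit preliminary step: it first derives a zeroth-order bound~\eqref{eq:estimate-zeroth-order} and then self-improves it to~\eqref{eq:bound-gamma-ts} by applying the triangle inequality along a partition and using $[\omega]=0$. Your proposal omits this step entirely, so the key estimate in your passage-to-the-limit argument is not justified as written. Incorporating an analogue of the paper's self-improvement step (establishing the needed increment bound before running the Riemann-sum comparison) would close the gap.
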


\begin{remark}
	If \eqref{eq:taylor-first-order} holds, then we can always represent $\eta = F_{\#} (e_{0\#}\eta)$, where $F$ denotes the regular Lagrangian flow for $V$.
\end{remark}

\begin{proof}
	Assume first that $\eta$ is concentrated on integral curves of $V$. Then
	\begin{equation}\label{eq:bound-gamma-ts} \nor{ \theta_t - \theta_s}_{p_0} = \nor{ \int_s^t V_\tau(\theta_\tau) \d\tau }_{p_0} \le  C^{1/p_0}\int_s^t \nor{ V_\tau}_{p_0} \d\tau.
	\end{equation}
	It follows that
	\begin{equation}\label{eq:proof-taylor-first-order}\begin{split}  & \nor{ \int_s^t V_\tau(\theta_r) - \bra{\int_s^t V_\tau \d\tau} (\theta_s) }_{L^q(C([a,b];\R^d),\eta)} \\
	& \le \int_s^t \nor{ V_\tau(\theta_\tau) - V_\tau(\theta_s)}_{L^q(C([a,b];\R^d), \eta)} \d\tau  \\
	& \lesssim \int_s^t\nor{ |\theta_\tau - \theta_s|  \bra{|\DD V_\tau|^*(\theta_\tau) + |\DD V_\tau|^*(\theta_s)  } }_{L^q(C([a,b];\R^d), \eta)} \d\tau \qquad \text{by \eqref{eq:pontiwise-sobolev-star},}\\
	& \le  \int_s^t \nor{ \theta_\tau - \theta_s}_{L^{p_0}(C([a,b];\R^d),\eta)} \nor{ |\DD V_\tau|^*(\theta_\tau) + |\DD V_\tau|^*(\theta_s)}_{L^{p_1}(C([a,b];\R^d),\eta)}  \d\tau \\
	& \lesssim \int_s^t C^{1/p_1} \nor{ \DD V_\tau }_{p_1} C^{1/p_0}\int_s^\tau \nor{V_u}_{p_0} \d u \d\tau \quad \text{by \eqref{eq:bound-gamma-ts} and \eqref{eq:maximal},} \\
	& \le C^{1/q} \int_s^t  \nor{ \DD V_\tau }_{p_1} \d\tau  \int_s^t  \nor{ V_\tau}_{p_0} \d\tau\end{split}
	\end{equation}
	as claimed. Conversely, assume that \eqref{eq:taylor-first-order} holds. Then
	\begin{equation}\label{eq:estimate-zeroth-order} \nor{ \theta_t - \theta_s}_{L^{p_0}(\eta)} \le  \omega_{st} + \nor{ \int_s^t V_\tau(\theta_\tau) \d\tau }_{L^{p_0}(\eta)} \le \omega_{st} +  C^{1/p_0} \int_s^t \nor{ V_\tau}_{p_0} \d\tau.
	\end{equation}
	Applying the triangle inequality along any partition $\pi$ of $[s,t]$, we obtain that
	\[ \nor{ \theta_t - \theta_s}_{L^{p_0}(\eta)} \le \sum_{t_i \in \pi} \omega_{t_i t_{i+1}} +  C^{1/p_0} \int_s^t \nor{ V_\tau}_{p_0} \d\tau.\]
	Therefore, \eqref{eq:estimate-zeroth-order} self-improves to \eqref{eq:bound-gamma-ts} and we can argue then exactly as in \eqref{eq:proof-taylor-first-order}.
	It follows that
	\[  \nor{ \theta_t - \theta_s - \int_s^t V_\tau (\theta_\tau) \d\tau }_{L^q(C([a,b];\R^d), \eta)} \le  \omega_{st} +  C^{1/q} \int_s^t  \nor{ \DD V_\tau }_{p_1} \d\tau  \int_s^t  \nor{ V_\tau}_{p_0} \d\tau.\]
	This entails that $f(t) := \theta_t -\theta_0 - \int_0^t V_\tau(\theta_\tau) \d\tau$ has zero total variation, i.e., it must be $\eta$-a.e.\ constant (and null since $f(0) = 0$).
\end{proof}

As curious consequence of Proposition~\ref{prop:taylor-first-order}, we obtain the following quantitative stability estimate for regular Lagrangian flows, which is of interest itself though not used elsewhere in this paper. 

In particular it quantifies the qualitative statement that weak convergence of vector fields in time and strong convergence in space leads to convergence of the respective flows, see~e.g.\ \cite[remark 2.11]{crippa2008estimates} or \cite{gigli2020korevaar} for a Trotter-type formula.

To state it we write
\[ \Phi^\delta(x) := \log(1 + \delta^{-1} |x| )\]
for $x\in \R^d$, $\delta>0$,
and notice that, for every $x$, $y \in \R^d$,
\begin{equation} \label{eq:estimate-Phi-delta}\Phi^\delta(y) \le \Phi^\delta(x) + \frac{|y-x|}{\delta + |x| }.\end{equation}
Indeed,
\[ \begin{split} \log(1 + \delta^{-1} |y| ) & \le \log(1+ \delta^{-1} (|x|+|y|)) = \log\bra{ (1+ \delta^{-1}|x|) \bra{  1 +\frac{|y-x|}{\delta + |x| }} }\\
& \le \log\bra{ 1+ \delta^{-1}|x| } +  \frac{|y-x|}{\delta + |x| }.\end{split}\]

\begin{corollary}\label{cor:first-order}
For $i=1,2$ let 
\[
V^i \in L^1\bra{[a,b]; L^{p_0}(\R^d)}, \quad \DD V^i \in L^1\bra{[a,b]; L^{p_1}(\R^d)}, 
\]
with $p_0$, $p_1$, $a$, $b$ as in Proposition~\ref{prop:taylor-first-order},
$\eta^i$ be finite measures on $\Theta:=C([a,b];\R^d)$ with $e_{t\#} \eta^i  \leq C_i\mathcal{L}^d$ for all $t\in [a,b]$ 
concentrated on integral curves of $V^i$. Then, for every $\delta >0$ one has
\[  \begin{split}   \Big\|\sup_{i} \Phi^\delta( \theta^1_{t_i} - \theta^2_{t_i})  & 
\Big\|_{L^q(\Theta\times\Theta, \eta)}  \lesssim  \nor{  \Phi^\delta( \theta^1_{0} - \theta^2_{0} ) }_{L^q(\Theta\times\Theta, \eta)} +  \sum_i \nor{ \DD \int_{t_i}^{t_{i+1}} V^1_\tau \d \tau }_{p_1} \\
&  \quad +  \frac 1 \delta \sum_{i}  \nor{ \int_{t_i}^{t_{i+1}} (V^1_\tau - V^2_\tau) \d \tau  }_{p_0}
+ \frac 1 \delta\sum_i \left( \omega^1_{t_{i}t_{i+1}} + \omega^2_{t_i t_{i+1}}\right) ,
\end{split}\]
whenever $a = t_0 < t_1 \ldots < t_n \le b$,
where $\omega^i_{st}$ are as in~\eqref{eq:omega-1} (with $C_i$, $V^i$ instead of $C$, $V$), and $\eta$ is any coupling between $\eta^1$ and $\eta^2$ (i.e.\ a Borel measure over $\Theta\times\Theta$ with marginals $\eta^1$ and $\eta^2$).
\end{corollary}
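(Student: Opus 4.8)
The plan is to run the discrete telescoping argument of Proposition~\ref{prop:taylor-first-order}, this time for the auxiliary quantity $g_t := \Phi^\delta(\theta^1_t - \theta^2_t)$ regarded as a function on $\Theta\times\Theta$ equipped with the coupling $\eta$. Relabel the partition index by $j$ (to avoid clashing with the labels $i=1,2$ of the fields). Since $\eta^i$ is concentrated on integral curves of $V^i$, for $\eta$-a.e.\ pair $(\theta^1,\theta^2)$ one has $\theta^i_{t_{j+1}} - \theta^i_{t_j} = \int_{t_j}^{t_{j+1}} V^i_\tau(\theta^i_\tau)\,\d\tau$ for $i=1,2$. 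Applying~\eqref{eq:estimate-Phi-delta} with $x := \theta^1_{t_j} - \theta^2_{t_j}$ and $y := \theta^1_{t_{j+1}} - \theta^2_{t_{j+1}}$ gives, $\eta$-a.e.,
\[ g_{t_{j+1}} \le g_{t_j} + R_j, \qquad R_j := \frac{\bigl| (\theta^1_{t_{j+1}} - \theta^1_{t_j}) - (\theta^2_{t_{j+1}} - \theta^2_{t_j}) \bigr|}{\delta + |\theta^1_{t_j} - \theta^2_{t_j}|} \ge 0 . \]
Telescoping yields $\sup_j g_{t_j} \le g_{t_0} + \sum_j R_j$ $\eta$-a.e.; taking $L^q(\eta)$-norms and using the triangle inequality reduces the statement to bounding each $\nor{R_j}_{L^q(\eta)}$, noting that $g_{t_0} = \Phi^\delta(\theta^1_0 - \theta^2_0)$.

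To estimate $R_j$ I set $W^i_j := \int_{t_j}^{t_{j+1}} V^i_\tau\,\d\tau$, so that $W^i_j \in L^{p_0}(\R^d)$ and $\DD W^1_j = \int_{t_j}^{t_{j+1}} \DD V^1_\tau\,\d\tau \in L^{p_1}(\R^d)$, and I split the numerator of $R_j$ as $T_1 + T_2 + T_3 + T_4$, where
\[ T_1 := \int_{t_j}^{t_{j+1}} V^1_\tau(\theta^1_\tau)\,\d\tau - W^1_j(\theta^1_{t_j}), \qquad T_4 := W^2_j(\theta^2_{t_j}) - \int_{t_j}^{t_{j+1}} V^2_\tau(\theta^2_\tau)\,\d\tau , \]
\[ T_2 := W^1_j(\theta^1_{t_j}) - W^1_j(\theta^2_{t_j}), \qquad T_3 := \Bigl(\int_{t_j}^{t_{j+1}} (V^1_\tau - V^2_\tau)\,\d\tau\Bigr)(\theta^2_{t_j}) . \]
Using the integral-curve identity, $T_1 = (\theta^1_{t_{j+1}} - \theta^1_{t_j}) - W^1_j(\theta^1_{t_j})$ and $T_4 = W^2_j(\theta^2_{t_j}) - (\theta^2_{t_{j+1}} - \theta^2_{t_j})$ are exactly the remainders controlled by~\eqref{eq:taylor-first-order} (applied to $\eta^1, V^1$ and to $\eta^2, V^2$ respectively), so $\nor{T_1}_{L^q(\eta)} = \nor{T_1}_{L^q(\eta^1)} \le \omega^1_{t_j t_{j+1}}$ and $\nor{T_4}_{L^q(\eta)} \le \omega^2_{t_j t_{j+1}}$ with $\omega^i$ as in~\eqref{eq:omega-1}; since $\delta + |\theta^1_{t_j} - \theta^2_{t_j}| \ge \delta$, these contribute at most $\tfrac 1\delta(\omega^1_{t_j t_{j+1}} + \omega^2_{t_j t_{j+1}})$ to $\nor{R_j}_{L^q(\eta)}$. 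For $T_3$, the crude bound $1/(\delta + |\cdot|) \le 1/\delta$ together with $e_{t_j\#}\eta^2 \le C_2 \mathcal{L}^d$ and the finiteness of $\eta^2$ yields a contribution $\lesssim \tfrac 1\delta \nor{\int_{t_j}^{t_{j+1}}(V^1_\tau - V^2_\tau)\,\d\tau}_{p_0}$. For $T_2$ one applies the pointwise Sobolev estimate~\eqref{eq:pontiwise-sobolev-star} componentwise to $W^1_j$ to get $|T_2| \lesssim |\theta^1_{t_j} - \theta^2_{t_j}|\bigl(|\DD W^1_j|^*(\theta^1_{t_j}) + |\DD W^1_j|^*(\theta^2_{t_j})\bigr)$, so that $|T_2|/(\delta + |\theta^1_{t_j} - \theta^2_{t_j}|) \le |\DD W^1_j|^*(\theta^1_{t_j}) + |\DD W^1_j|^*(\theta^2_{t_j})$; taking $L^q(\eta)$-norms through the two marginals, using $e_{t_j\#}\eta^i \le C_i \mathcal{L}^d$, the $L^{p_1}$-boundedness of the maximal function~\eqref{eq:maximal}, and the finiteness of the $\eta^i$, this contributes $\lesssim \nor{\DD \int_{t_j}^{t_{j+1}} V^1_\tau\,\d\tau}_{p_1}$. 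Summing over $j$ gives precisely the asserted inequality.

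The only genuinely delicate point — the main (mild) obstacle — is the joint handling of the denominator $\delta + |\theta^1 - \theta^2|$ and the three integrability exponents $q$, $p_0$, $p_1$: the Lipschitz-type term $T_2$ must be estimated \emph{keeping} the bounded factor $|\theta^1 - \theta^2|/(\delta + |\theta^1 - \theta^2|) \le 1$ rather than dividing by $\delta$, since otherwise one obtains a spurious factor $1/\delta$ in front of $\nor{\DD W^1_j}_{p_1}$ which is not present in the statement; and the passage from the natural $L^{p_0}$- and $L^{p_1}$-estimates to the $L^q$-norm appearing in the conclusion relies on the finiteness of $\eta^1$, $\eta^2$ together with $p_0, p_1 \ge q$ (which is where $1/p_0 + 1/p_1 = 1/q$ enters), with the implicit constant in $\lesssim$ absorbing the common total mass $\eta^1(\Theta) = \eta^2(\Theta)$ and the compressibility constants $C_i$. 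Everything else is a direct application of Proposition~\ref{prop:taylor-first-order} and routine change-of-variables through the bounds $e_{t\#}\eta^i \le C_i\mathcal{L}^d$.
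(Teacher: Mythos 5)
Your proof is correct and follows the same route as the paper's: apply the elementary inequality~\eqref{eq:estimate-Phi-delta} with $x := \theta^1_{t_j}-\theta^2_{t_j}$, $y := \theta^1_{t_{j+1}}-\theta^2_{t_{j+1}}$, telescope over the partition, and split the increment numerator into the two Taylor remainders (controlled via~\eqref{eq:omega-1}), a Lipschitz-type term for $W^1_j := \int_{t_j}^{t_{j+1}} V^1_\tau\,\d\tau$ kept with the full denominator $\delta + |\theta^1-\theta^2|$ so the ratio stays bounded and only the maximal function of $\DD W^1_j$ survives, and a drift-difference term divided crudely by $\delta$. Your decomposition $T_1+T_2+T_3+T_4$ is just a more explicit bookkeeping of the paper's two-step splitting, and your remarks on where $q \le p_0, p_1$ and the finiteness of the $\eta^i$ enter are precisely the points the paper leaves to the reader with ``$L^p$ spaces are nested.''
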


\begin{proof}
Write 
\[
R_{st}^i :=  \theta^i_{t}- \theta^i_s - \left( \int_s^t V_\tau \d \tau\right) (\theta_s^i), 
\]
so that $\nor{R_{st}^i}_q \le  \omega^i _{st}$. We use \eqref{eq:estimate-Phi-delta} with $y= \theta^1_{t} - \theta^2_{t}$, $x = \theta^1_s - \theta^2_s$, obtaining
\[\begin{split}  \Phi^\delta ( \theta^1_{t} - \theta^2_{t} ) & \le \Phi^\delta ( \theta^1_{s} - \theta^2_{s} ) + \frac{ \abs{ \int_s^t V_\tau^1 \d \tau (\theta^1_s) - \int_s^t V^2_\tau \d \tau  (\theta^2_s) } + \abs{R^1_{st}} + \abs{ R^2_{st} } }{ \delta + | \theta^1_{s} - \theta^2_{s} | }\\
& \le \Phi^\delta ( \theta^1_{s} - \theta^2_{s} ) +    \frac{ \abs{ \int_s^t  V_\tau^1\d \tau (\theta^1_s) - \int_s^t V_\tau^1 \d \tau  (\theta^2_s)} }{| \theta^1_{s} - \theta^2_{s} |}  \\
& \quad +  \delta^{-1} \bra{ \abs{ \int_s^t ( V_\tau^1 - V_\tau^2) \d \tau (\theta^2_s) } + \abs{R^1_{st}} + \abs{ R^2_{st} } }.\end{split}\]
To conclude, it is sufficient to choose $s := t_{i}$, $t:=t_{i+1}$, proceed recursively, and finally take the $L^q$ norms (recall that we are considering finite measures, so that $L^p$ spaces are nested).
\end{proof}

\section{Auxiliary lemmata}

The lemma below provides a useful version of the Lebesgue dominated convergence theorem.

\begin{lemma}\label{lm_weak_meas_conv1}
	If $\{f_k\}\subset L^1(E,\mu)$ is a bounded and equintegrable sequence of nonnegative functions, and $g_k\to 0$ in measure $\mu$ with $0\leq g_k\leq C$
	$\mu$-a.e., then
	\[
	\int_E f_kg_k \d\mu \to 0. 
	\]
\end{lemma}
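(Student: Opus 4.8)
The plan is to prove the statement by a routine splitting of the domain according to where $g_k$ is ``large'' versus ``small'', exploiting boundedness of $\{f_k\}$ in $L^1$ on the small part and equiintegrability together with convergence in measure on the large part. Since the conclusion is an $\eps$-statement, I would fix $\eps>0$ once and for all and produce a threshold $K$ beyond which $\int_E f_k g_k\,\d\mu<\eps$.

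First I would record the two quantitative inputs. Set $M:=\sup_k\nor{f_k}_{L^1(E,\mu)}<\infty$ (boundedness); we may assume $C>0$, the case $C=0$ being trivial. By equiintegrability of $\{f_k\}$ there is $\delta>0$ such that
\[ \sup_k \int_A f_k\,\d\mu < \frac{\eps}{2C} \qquad\text{for every measurable $A$ with $\mu(A)<\delta$.}\]
Next, choose $\eta>0$ small enough that $\eta M<\eps/2$. Since $g_k\to 0$ in measure $\mu$, we have $\mu(\{g_k\geq \eta\})\to 0$, so there is $K$ such that $\mu(\{g_k\geq\eta\})<\delta$ for all $k\geq K$.

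Then for $k\geq K$ I would split
\[ \int_E f_k g_k\,\d\mu = \int_{\{g_k<\eta\}} f_k g_k\,\d\mu + \int_{\{g_k\geq\eta\}} f_k g_k\,\d\mu.\]
On the first set $g_k<\eta$, so this term is at most $\eta\int_E f_k\,\d\mu\leq \eta M<\eps/2$. On the second set $g_k\leq C$ $\mu$-a.e., so the term is at most $C\int_{\{g_k\geq\eta\}} f_k\,\d\mu<C\cdot\eps/(2C)=\eps/2$ by the choice of $\delta$ and of $K$. Adding the two bounds gives $\int_E f_k g_k\,\d\mu<\eps$ for all $k\geq K$, and since $\eps>0$ was arbitrary this is exactly the claim.

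There is no real obstacle here: the only points requiring a little care are making sure the constants are chosen in the right order ($\eta$ depending on $M$, then $\delta$ from equiintegrability, then $K$ from convergence in measure) and noting that nonnegativity of $f_k$ and the pointwise bound $0\leq g_k\leq C$ are used to control each piece; no finiteness of $\mu$ is needed beyond what is already encoded in $M<\infty$.
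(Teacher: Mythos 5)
Your proof is correct and follows essentially the same approach as the paper: split the integral according to where $g_k$ exceeds a small threshold, control the ``large $g_k$'' part via the uniform pointwise bound $g_k\le C$ together with equiintegrability of $\{f_k\}$ and convergence of $g_k$ to $0$ in measure, and control the ``small $g_k$'' part via the uniform $L^1$ bound on $\{f_k\}$. The only difference is cosmetic: you track explicit $\eps$'s while the paper takes a $\limsup$ and lets $\eps\to 0$ at the end.
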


\begin{proof}
	For every $\varepsilon>0$ let $E_{k,\varepsilon}:= \{x\in E\colon |g_k(x)|>\varepsilon\}$.
	Then
	\[
	\int_E f_kg_k \d\mu  = \int_{E_{k,\varepsilon}} f_kg_k \d\mu  + 
	\int_{E\setminus E_{k,\varepsilon}} f_kg_k \d\mu  \leq C \int_{E_{k,\varepsilon}} f_k\d\mu + \varepsilon \|f_k\|_{L^1(E,\mu)},
	\]
	the first term vanishing as $k\to\infty$ because 
$\lim_k \mu(E_{k,\varepsilon})=0$ and in view of
	 equiintegrability of $f_k$.
	Since $\varepsilon>0$ is arbitrary, one gets the claim.
\end{proof}

We will use also another lemma very similar to the previous one, which we formulate for purely notational convenience for
one-parameter families of functions rather than for sequences. 

\begin{lemma}\label{lm_domconvmeas1}
	Let $\cur{\nu_s}_{s \in [a,b]}$ be a tight family of absolutely continuous 
	Borel measures $\nu_s = f_s\mathcal{L}^d$ such that 
	\[ \sup_{s \in [a,b]} \nor{ f_s}_ \infty  < \infty.\]
	Let $\cur{g_\delta}_{\delta \in [0,1]}$ be a family of non-negative Borel functions with $0\leq g_\delta \leq C$ for all
	$\delta$ and 
	$\lim_{\delta \to 0} g_{\delta} = 0$ in measure. 
	Then
	\[\lim_{\delta \to 0} \sup_{s \in [a,b]} \int g_{\delta} \d \nu_s = 0.\]
\end{lemma}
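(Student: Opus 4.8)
The plan is a routine truncation argument: tightness of $\cur{\nu_s}$ reduces the problem to a set of finite Lebesgue measure, and the uniform bound on the densities $f_s$ lets us transfer smallness in $\mathcal{L}^d$-measure into smallness in $\nu_s$-measure, uniformly in $s$.

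First I would put $M := \sup_{s \in [a,b]} \nor{f_s}_\infty < \infty$, so that $\nu_s \le M \mathcal{L}^d$ for every $s \in [a,b]$, and fix $\eps > 0$. By tightness, choose a compact set $K = K_\eps \subset \R^d$ with $\nu_s(\R^d \setminus K) < \eps$ for all $s \in [a,b]$; since $0 \le g_\delta \le C$ this gives at once
\[ \int_{\R^d \setminus K} g_\delta \, \d \nu_s \le C \eps \quad \text{for every } \delta \in [0,1] \text{ and } s \in [a,b].\]
It remains to estimate $\int_K g_\delta \, \d\nu_s$. The compact set $K$ is now fixed, hence $\mathcal{L}^d(K) < \infty$, and I would introduce the auxiliary threshold $\eta := \eps / (M \mathcal{L}^d(K) + 1) > 0$. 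Splitting the integral over $K$ according to the level sets of $g_\delta$ and using $\nu_s \le M\mathcal{L}^d$ yields
\[ \int_K g_\delta \, \d\nu_s \le C \, \nu_s\bra{ K \cap \cur{ g_\delta > \eta} } + \eta \, \nu_s(K) \le C M \, \mathcal{L}^d\bra{ K \cap \cur{g_\delta > \eta} } + \eta M \, \mathcal{L}^d(K).\]
The last summand is $< \eps$ by the choice of $\eta$. For the first one, since $g_\delta \to 0$ in measure (with respect to $\mathcal{L}^d$) and $\mathcal{L}^d(K) < \infty$, there is $\delta_0 = \delta_0(\eps) > 0$ such that $\mathcal{L}^d(K \cap \cur{g_\delta > \eta}) < \eps/(CM+1)$ for all $\delta < \delta_0$, so this summand is also $< \eps$. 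Collecting the three bounds, $\sup_{s \in [a,b]} \int g_\delta \, \d\nu_s \le (C+2)\eps$ for every $\delta < \delta_0$, and since $\eps > 0$ was arbitrary the conclusion follows.

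There is no real obstacle here; the only point needing care is the order of the quantifiers, namely that $K$ — and therefore $\mathcal{L}^d(K)$, then $\eta$, and finally the radius $\delta_0$ — must be chosen after $\eps$ but before $\eta$ and $\delta$, so that every estimate is genuinely uniform in $s$. The uniform density bound $M$ is precisely what makes the transition from sets that are $\mathcal{L}^d$-null in the limit to $\nu_s$-small sets uniform over $s \in [a,b]$.
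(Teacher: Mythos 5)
Your proof is correct and follows essentially the same strategy as the paper: use tightness to reduce to a fixed compact set $K$, bound the tail contribution by $C\eps$, and show $\int_K g_\delta\,\d\mathcal{L}^d \to 0$ using the uniform bound on $f_s$. The only difference is that you unroll the final step into an explicit level-set truncation, whereas the paper invokes the dominated convergence theorem; your version is slightly more self-contained since it works directly from convergence in measure rather than passing through a.e.\ convergence.
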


\begin{proof}
	For any $\eps>0$, let $K \subseteq \R^d$ be compact with $\sup_{s \in [a,b]} \nu_s(K^c)< \eps$. Then, 
	\[\begin{split} \int_{\R^d} g_{\delta} \d \nu_s &  = \int_K g_{\delta}f_s\d x + \int_{K^c} g_\delta f_s \d x
	\le C \int_K g_{\delta}\d x + \sup_{\delta \in [0,1]} \nor{g_\delta}_\infty \nu_s(K^c) \\
	& \le  C \bra{ \int_K g_{\delta}\d x  + \eps}.
	\end{split}\]
	By Lebesgue dominated convergence theorem one has $\lim_{\delta \to 0}\int_K g_{\delta} \d x = 0$, and hence
	\[ \limsup_{\delta \to 0} \sup_{s \in [a,b]} \int g_{\delta} \d \nu_s  \le C \eps,\]
	and we conclude since $\eps>0$ is arbitrary small.
\end{proof}

The following lemma gives a chain rule for Sobolev functions along ``almost every'' integral curve of an ODE.

\begin{lemma}\label{lm_chainrule_sobolev_curves}
	Let $p, q \ge 1$ with $\frac{1}{p} + \frac 1 q \le 1$, let $f\in W^{1,p}(\R^d)$, $V \in L^1([a,b]; L^{q}(\R^d))$ and $\eta$ be a finite measure on $C([a,b]; \R^d)$ with bounded compression and concentrated on integral curves of $V$. Then, for any a.e.\ representative of $f$ one has that for $\eta$-a.e.\ $\theta$ the curve $t \mapsto f(\theta_t)$ is absolutely continuous and
	\[ \partial_t f(\theta_t) = V(\theta_t) \nabla f(\theta_t) \quad \text{for a.e.~$t \in [a,b]$.}\]
\end{lemma}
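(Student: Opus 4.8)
The plan is to prove Lemma~\ref{lm_chainrule_sobolev_curves} by the standard approximation-plus-commutator scheme adapted to the Lagrangian setting on curves. First I would reduce to the smooth case: pick a sequence $f_n \in C^\infty(\R^d) \cap W^{1,p}(\R^d)$ with $f_n \to f$ in $W^{1,p}$ and, passing to a subsequence, also $f_n \to f$, $\nabla f_n \to \nabla f$ pointwise $\mathcal L^d$-a.e. For each $n$, since $f_n$ is $C^1$ and $\eta$ is concentrated on (absolutely continuous) integral curves $\theta$ of $V$ — i.e.\ $\dot\theta_t = V(\theta_t)$ for a.e.\ $t$ — the classical chain rule gives that $t \mapsto f_n(\theta_t)$ is absolutely continuous with $\partial_t f_n(\theta_t) = \nabla f_n(\theta_t)\cdot V(\theta_t)$, hence
\[
f_n(\theta_t) - f_n(\theta_s) = \int_s^t \nabla f_n(\theta_\tau)\cdot V(\theta_\tau)\,\d\tau
\]
for all $s,t\in[a,b]$ and $\eta$-a.e.\ $\theta$.

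Next I would pass to the limit $n\to\infty$ in this identity in a suitable Lebesgue space over $C([a,b];\R^d)$ with respect to $\eta$. The key point is that the bounded-compression hypothesis $e_{t\#}\eta \le C\mathcal L^d$ for all $t$ transfers $L^p$-type control from $\R^d$ to curve space: for the left-hand side, $\|f_n(\theta_t) - f(\theta_t)\|_{L^p(\eta)}^p = \int |f_n - f|^p \,\d(e_{t\#}\eta) \le C\|f_n - f\|_p^p \to 0$, and similarly pointwise after taking a further subsequence via Lemma~\ref{lm_LagrFlowNeglset} applied to the $\mathcal L^d$-negligible set where $f_n \not\to f$; for the integrand on the right, Hölder's inequality with $\tfrac1p + \tfrac1q \le 1$ gives
\[
\int_a^b \!\!\int \big|(\nabla f_n - \nabla f)(\theta_\tau)\cdot V(\theta_\tau)\big|\,\d\eta(\theta)\,\d\tau
\le C \int_a^b \|\nabla f_n - \nabla f\|_p\,\|V_\tau\|_q\,\d\tau \to 0,
\]
using Fubini (the integrand is jointly measurable) and $V\in L^1([a,b];L^q)$. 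This yields, for $\eta$-a.e.\ $\theta$,
\[
f(\theta_t) - f(\theta_s) = \int_s^t \nabla f(\theta_\tau)\cdot V(\theta_\tau)\,\d\tau \quad\text{for all } s,t\in[a,b],
\]
which exhibits $t\mapsto f(\theta_t)$ as the integral of an $L^1_{\mathrm{loc}}$ function, hence absolutely continuous, with the stated derivative a.e.\ in $t$.

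The main obstacle — and the place requiring care — is the interchange of the $n$-limit with both the time integral and the $\eta$-integral simultaneously, together with the measurability of $(\tau,\theta)\mapsto \nabla f(\theta_\tau)\cdot V(\theta_\tau)$ as a function on $[a,b]\times C([a,b];\R^d)$, since $\nabla f$ and $V$ are only defined up to $\mathcal L^d$-negligible (resp.\ $\mathcal L^1\otimes\mathcal L^d$-negligible) sets and we are composing with the evaluation maps $e_\tau$. Here Lemma~\ref{lm_LagrFlowNeglset} (or rather its proof via Fubini applied to $\eta$ rather than to a flow) is exactly what guarantees that changing the representative of $\nabla f$ on a null set does not affect $\nabla f(\theta_\tau)$ for $\eta$-a.e.\ $\theta$ and a.e.\ $\tau$; combined with the bounded-compression bound on $e_{\tau\#}\eta$ this makes all the integrals above well-defined and the dominated/absolutely-continuous convergence arguments legitimate. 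Everything else is routine once the smooth approximation and the push-forward estimates are in place.
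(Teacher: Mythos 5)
Your approach is essentially the same as the paper's: approximate $f$ by smooth functions $f_n$, apply the classical chain rule along integral curves for each $f_n$, and pass to the limit using the bounded-compression hypothesis to transfer $L^p$-convergence from $\R^d$ to curve space and H\"older with $1/p + 1/q \le 1$ to control the error in the integrand. The one place where your sketch is genuinely looser than the paper's proof is in upgrading the convergence from ``for each fixed $t$, $\eta$-a.e.'' (hence, by Fubini, ``for $\eta$-a.e.\ $\theta$ and a.e.\ $t$'') to ``for $\eta$-a.e.\ $\theta$ and \emph{all} $t\in[a,b]$.'' This step matters because the conclusion asserts absolute continuity of the actual curve $t\mapsto f(\theta_t)$, not merely of an a.e.-in-$t$ modification. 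The paper handles this by choosing a \emph{fast-converging} approximating sequence, i.e.\ one with $\sum_n \nor{f - f_n}_p + \nor{\nabla f - \nabla f_n}_p < \infty$, and then estimating
\[ \nor{ \sup_{t\in[a,b]} \abs{ (f_{n+1}-f_n)\circ e_t } }_{L^1(\eta)} \lesssim \nor{f_{n+1}-f_n}_p + \int_a^b \nor{\nabla f_{n+1}-\nabla f_n}_p \nor{V_\tau}_q \, \d\tau, \]
which is summable in $n$; this yields $\eta$-a.e.\ uniform-in-$t$ convergence of the whole curves $f_n(\theta_\cdot)$ in $C([a,b])$, after which the limit is identified with $f(\theta_\cdot)$ and absolute continuity follows from the limiting integral identity. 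Your subsequence extraction and appeal to Lemma~\ref{lm_LagrFlowNeglset} only give the weaker a.e.-in-$t$ statement, so you should incorporate the fast-converging-sequence trick (or an equivalent sup-in-$t$ estimate) to close the argument; once that is in place the rest of your reasoning matches the paper's.
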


\begin{proof}
	We approximate $f$ with a fast converging sequence  $(f_n)_{ n\ge 1} \subseteq  C^1(\R^d)$, i.e., such that
	\[ \sum_{n =1 }^\infty \nor{ f - f_n}_{p} + \nor{ \nabla f - \nabla f_n}_p < \infty.\]
	The thesis clearly holds for every $f_n$ in place of $f$. We have
	\[ \nor{ f_n \circ e_t - f \circ e_t}_{L^p(\eta)} \lesssim \nor{ f_n -f}_p \quad  \text{for every $t \in [a,b]$,}\]
	so that for every $t \in [a,b]$, $\eta$-a.e., $\lim_{n \to \infty} f_n(\theta_t) = f(\theta_t)$. In fact the limit holds $\eta$-a.e.\ and uniformly with respect to $t \in [a,b]$. To show this, we notice that
	\[\begin{split}
	\nor{ \sup_{t \in [a,b]} (f_{n+1}- f_n) \circ e_t}_{L^(\eta)} & \le \nor{ (f_{n+1} - f_n)\circ e_0}_{L^1(\eta)}  \\
	&\quad +\nor{ \int_a^b |\nabla f_{n+1} - \nabla f_{n+1}| | V_\tau| \circ e_\tau \d \tau}_{L^1(\eta)} \\
	& \lesssim \nor{ f_{n+1} - f_n}_{p}\\
	& \quad + \int_a^b \nor{ \nabla f_{n+1} - \nabla f_n}_{p} \nor{V_\tau}_q \d \tau.
	\end{split}\]
	The last quantity is summable with respect to $n \ge 1$. It follows that $\eta$-a.e.\ the sequence $(f_n(\theta_s)_{s \in [a,b]}$ is Cauchy, hence convergent in $C([a,b])$, towards $(f(\theta_s))_{s \in [a,b]}$, that is in particular $\eta$-a.e. continuous.
	
	To argue that it is indeed absolutely continuous, we use the bound
	so that, $\eta$-a.e., for every $t \in [a,b]$,
	\[\lim_{n \to \infty} \int_0^t \nabla f_n (\theta_\tau) V_\tau(\theta_\tau) \d \tau = \int_s^t \nabla f (\theta_\tau) V_\tau(\theta_\tau) \d \tau.
	\]
	To conclude, we simply pass to the limit in the $\eta$-a.e.\ identity
	\[ f_n(\theta_t) - f_n(\theta_0) = \int_a^t \nabla f_n (\theta_\tau) V_\tau(\theta_\tau) \d \tau.\]
\end{proof}

\bibliographystyle{plain}
\bibliography{frobenius_final}
\end{document}